\theoremstyle{plain}
\newtheorem{theorem}{Theorem}
\newtheorem{corollary}{Corollary}
\newtheorem{lemma}{Lemma}
\theoremstyle{definition}
\newtheorem{definition}{Definition}
\theoremstyle{remark}
\def\la{\langle}
\def\ra{\rangle}
\def\be{\begin{equation}}
\def\ee{\end{equation}}
\def\bea{\begin{eqnarray}}
\def\eea{\end{eqnarray}}
\begin{document}
\title{Symmetric (not Complete Intersection) Numerical Semigroups Generated by 
Six Elements}
\author{Leonid G. Fel\\ \\
Department of Civil Engineering, Technion, Haifa 32000, Israel\\
{\sl e-mail: lfel@technion.ac.il}}
\date{}
\maketitle
\vspace{-.5cm}
%%%%%%%%%%%%%%%%%%%%%%%%%%%%%%%%%%%%%%%%%%%%%%%%%%%%%%%%%%%
\begin{abstract}
We consider symmetric (not complete intersection) numerical semigroups ${\sf S}_
6$, generated by a set of six positive integers $\{d_1,\ldots,d_6\}$, $\gcd(d_1,
\ldots,d_6)=1$, and derive inequalities for degrees of syzygies of such
semigroups and find the lower bound for their Frobenius numbers. We show that 
this bound may be strengthened if ${\sf S}_6$ satisfies the Watanabe lemma.\\ \\
{\bf Keywords:} symmetric (not complete intersection) semigroups, Betti's 
numbers, Frobenius number \\
{\bf 2010 Mathematics Subject Classification:} Primary -- 20M14, Secondary --
11P81.
\end{abstract}
%%%%%%%%%%%%%%%%%%%%%%%%%%%%%%%%%%%%%%%%%%%%%%%%%%%%%%%%%%%%%%%%%%%%%%
\section{Symmetric numerical semigroups generated by six integers}\label{s1}
%%%%%%%%%%%%%%%%%%%%%%%%%%%%%%%%%%%%%%%%%%%%%%%%%
Let a numerical semigroup ${\sf S}_m=\la d_1,\ldots,d_m\ra$ be generated by a
set of positive integers $\{d_1,\ldots,d_m\}$, $d_1<\ldots<d_m$, such that 
$\gcd(d_1,\ldots,d_m)=1$, where $d_1$ and $m$ denote multiplicity and embedding 
dimension ({\em edim}) of ${\sf S}_m$. There exist $m-1$ polynomial identities 
\cite{fe017} for degrees of syzygies associated with semigroup ring $k[{\sf S}
_m]$. They are a source of various relations for semigroups of different nature.
In the case of complete intersection (CI) semigroups such relation for degrees 
of the 1st syzygy was found in \cite{fe017}, Corollary 1. The next nontrivial 
case exhibits a symmetric (not CI) semigroup generated by $m\ge 4$ integers. In 
\cite{fe016} and \cite{fe018}, such semigroups with $m=4$ and $m=5$ were studied
and the lower bound for the Frobenius numbers $F({\sf S}_4)$ and $F({\sf S}_5)$ 
were found. In the present paper we deal with more difficult case of symmetric 
(not CI) semigroups ${\sf S}_6$.

Consider a symmetric numerical semigroup ${\sf S}_6$, which is not CI and 
generated by six positive integers. Its Hilbert series $H\left({\sf S}_6;t
\right)$ with independent Betti's numbers $\beta_1$, $\beta_2$ reads:
\bea
&&H\left({\sf S}_6;t\right)=\frac{Q_6(t)}{\prod_{i=1}^6\left(1-t^{d_i}\right)},
\label{j1}\\
&&Q_6(t)=1-\sum_{j=1}^{\beta_1}t^{x_j}+\sum_{j=1}^{\beta_2}t^{y_j}-\sum_{j=1}^
{\beta_2}t^{g-y_j}+\sum_{j=1}^{\beta_1}t^{g-x_j}-t^{g},\nonumber\\
&&\hspace{3.5cm}x_j,y_j,g\in{\mathbb Z}_{>},\qquad 2d_1\le x_j,y_j<g.\nonumber
\eea
The Frobenius number $F({\sf S}_6)$ of numerical semigroup ${\sf S}_6$ is 
related to the largest degree $g$ as follows:
\bea
F({\sf S}_6)=g-\sigma_1,\qquad \sigma_1=\sum_{j=1}^6d_j.\nonumber
\eea
There are two constraints more, $\beta_1>5$ and $d_1>6$. The inequality $\beta_1
>5$ holds since ${\sf S}_6$ is not CI, and the condition $d_1>6$ is necessary 
since a semigroup $\la m,d_2,\ldots,d_m\ra$ is never symmetric \cite{fe011}.
%%%%%%%%%%%%%%%%%%%%%%%%%%%%%%%%%%%%%%%%%%%%%%%%%%%%%%%%%%%%%%%%%%%%%%
\section{Polynomial identities for degrees of syzygies}\label{s2}
%%%%%%%%%%%%%%%%%%%%%%%%%%%%%%%%%%%%%%%%%%%%%%%%%
Polynomial identities for degrees of syzygies for numerical semigroups were
derived in \cite{fe017}, Thm 1. In the case of symmetric (not CI) semigroup
${\sf S}_6$, they read:
\bea
&&\sum_{j=1}^{\beta_1}x_j^r-\sum_{j=1}^{\beta_2}y_j^r+\sum_{j=1}^{\beta_2}
(g-y_j)^r-\sum_{j=1}^{\beta_1}(g-x_j)^r+g^r=0,\quad r\le 4,\label{j2}\\
&&\sum_{j=1}^{\beta_1}x_j^5-\sum_{j=1}^{\beta_2}y_j^5+\sum_{j=1}^{\beta_2}
(g-y_j)^5-\sum_{j=1}^{\beta_1}(g-x_j)^5+g^5=120\pi_6,\quad\pi_6=\prod_{j=1}^6d
_j.\nonumber
\eea
Only three of five identities in (\ref{j2}) are not trivial, these are for 
$r=1,3,5$:
\bea
&&{\cal B}_6g+\sum_{j=1}^{\beta_1}x_j=\sum_{j=1}^{\beta_2}y_j,\hspace{2.5cm}
{\cal B}_6=\frac{\beta_2-\beta_1+1}{2},\label{j3}\\
&&{\cal B}_6g^3+\sum_{j=1}^{\beta_1}x_j^2\left(3g-2x_j\right)=
\sum_{j=1}^{\beta_2}y_j^2\left(3g-2y_j\right),\label{j4}\\
&&{\cal B}_6g^5+\sum_{j=1}^{\beta_1}x_j^3\left(10g^2-15gx_j+6x_j^2\right)-
360\pi_6=\sum_{j=1}^{\beta_2}y_j^3\left(10g^2-15gy_j+6y_j^2\right).\label{j5}
\eea
where ${\cal B}_6$ is defined according to the expression for an arbitrary 
symmetric semigroup ${\sf S}_m$ in \cite{fe017}, Formulas (5.7, 5.9). The sign 
of ${\cal B}_6$ is strongly related to the famous Stanley Conjecture 4b 
\cite{sta89} on the unimodal sequence of Betti's numbers in the 1-dim local 
Gorenstein rings $k[{\sf S}_m]$. We give its simple proof in the case $edim=6$.
%%%%%%%%%%%%%%%%%%%%%%%%%%%%%%%%%%%%%%%%%%%%%%%%%%%%%%%%
\begin{lemma}\label{le1}
%%%%%%%%%%%%%%%%%%%%%%%%%%%%%%%%%%%%%%%%%%%%%%%%%%%%%%%%
Let a symmetric (not CI) semigroup ${\sf S}_6$ be given with the Hilbert series
$H\left({\sf S}_6;z\right)$ in accordance with (\ref{j1}). Then 
\bea
\beta_2\ge\beta_1+1.\label{j6}
\eea
\end{lemma}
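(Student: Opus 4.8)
The plan is to read off the sign of ${\cal B}_6$ directly from the three nontrivial identities (\ref{j3})--(\ref{j5}), which together encode the vanishing of $Q_6(t)$ to order five at $t=1$. First I would symmetrize the exponents about their common centre $g/2$ by writing $x_j=\frac{g}{2}+u_j$ and $y_j=\frac{g}{2}+v_j$. Each integrand $t,\ t^2(3g-2t),\ t^3(10g^2-15gt+6t^2)$ occurring in (\ref{j3})--(\ref{j5}) satisfies $\varphi(a)+\varphi(g-a)=g^{r}$ for $r=1,3,5$, so after the shift its even part is annihilated and only an odd polynomial in $u_j$ (resp.\ $v_j$) survives.

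The second, mainly computational, step is to substitute ${\cal B}_6=\frac{\beta_2-\beta_1+1}{2}$ and to observe that the contributions proportional to $\beta_2-\beta_1$ cancel, leaving three relations that are free of the Betti numbers:
\bea
&&\sum_{j=1}^{\beta_2}v_j-\sum_{j=1}^{\beta_1}u_j=\frac{g}{2},\qquad
\sum_{j=1}^{\beta_2}v_j^3-\sum_{j=1}^{\beta_1}u_j^3=\left(\frac{g}{2}\right)^3,\nonumber\\
&&\sum_{j=1}^{\beta_2}v_j^5-\sum_{j=1}^{\beta_1}u_j^5=\left(\frac{g}{2}\right)^5-60\pi_6.\nonumber
\eea
These say that the signed measure $\mu=\sum_j\delta_{v_j}-\sum_j\delta_{u_j}-\delta_{g/2}$, obtained by adjoining a single phantom exponent at $g/2$ (i.e.\ at $a=g$) to the $x$-side, has vanishing moments of orders $1$ and $3$ and fifth moment $-60\pi_6$. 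The target inequality (\ref{j6}) is precisely the statement that its zeroth moment $\int d\mu=\beta_2-\beta_1-1=2({\cal B}_6-1)$ is nonnegative.

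Third I would pin down the admissible window for $u_j,v_j$. Because the exponent set of $Q_6(t)$ is palindromic, $t^gQ_6(1/t)=-Q_6(t)$, the numbers $g-x_j$ and $g-y_j$ are again syzygy degrees and so are also bounded below by $2d_1$; combined with $2d_1\le x_j,y_j$ this yields the two-sided bound $|u_j|,|v_j|\le\frac{g}{2}-2d_1<\frac{g}{2}$. Thus every genuine exponent lies strictly inside $(-g/2,g/2)$, whereas the phantom point sits exactly at the right endpoint $g/2$; this asymmetry is what must drive the inequality. Testing $\mu$ against $\psi(t)=t\big((g/2)^2-t^2\big)^2$, which has a double zero at $t=\pm g/2$ so that $\psi(g/2)=0$, the three relations collapse to the single clean statement $\sum_j\psi(u_j)-\sum_j\psi(v_j)=60\pi_6>0$.

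The hard part will be converting this into the count inequality (\ref{j6}). The obstruction is structural: (\ref{j3})--(\ref{j5}) say nothing about the even moments $\sum u_j^2,\sum u_j^4$, and the two families $\{u_j\},\{v_j\}$ occupy the \emph{same} interval, so no nonnegative even test function can separate them. The discrepancy $\int d\mu=2({\cal B}_6-1)$ can therefore be accessed only through a polynomial $p(t)=c_0+c_1t+c_3t^3+c_5t^5$ built from the constant and the odd monomials whose $\mu$-moments are known, for which $\int p\,d\mu=c_0\,2({\cal B}_6-1)-60c_5\pi_6$ while also $\int p\,d\mu=\sum_jp(v_j)-\sum_jp(u_j)-p(g/2)$. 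I expect the decisive move to be a choice of $p$, sign-definite on the genuine window $[-(g/2-2d_1),\,g/2-2d_1]$ and vanishing at the phantom $g/2$, whose odd structure forces the two expressions for $\int p\,d\mu$ to trap the sign of ${\cal B}_6-1$. Making this precise --- equivalently, showing that the gap between $g/2-2d_1$ and $g/2$ guaranteed by $d_1>6$ is wide enough to yield $\beta_2\ge\beta_1+1$ rather than merely the unimodal $\beta_2\ge\beta_1$ --- is the crux; it may well require the additional lower bound $y_j\ge 3d_1$ coming from the second-syzygy filtration, which narrows the $v$-window and breaks the $u\leftrightarrow v$ symmetry. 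Once it is in place, $2({\cal B}_6-1)=\beta_2-\beta_1-1\ge0$ gives (\ref{j6}).
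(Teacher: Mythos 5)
Your reductions are correct as far as they go: the shift $x_j=\tfrac{g}{2}+u_j$, $y_j=\tfrac{g}{2}+v_j$ does turn (\ref{j3})--(\ref{j5}) into the three odd-moment relations you state (I checked the coefficients, including the $-60\pi_6$ in the fifth moment and the identity $\sum_j\psi(u_j)-\sum_j\psi(v_j)=60\pi_6$). But the proof is not completed: the step that would actually deliver $\beta_2-\beta_1-1\ge 0$ --- the choice of the test polynomial $p=c_0+c_1t+c_3t^3+c_5t^5$ --- is only conjectured, and your own structural remark shows why no such $p$ can exist within the framework you set up. To force $\int p\,d\mu\ge 0$ from the pointwise data you would need $p\ge 0$ on the window containing the $v_j$ and $p\le 0$ on the window containing the $u_j$; since both families occupy the \emph{same} interval symmetric about the origin, such a $p$ vanishes identically there, and then $c_0=0$ kills the access to the zeroth moment. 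The auxiliary hypothesis you invoke to break the symmetry ($y_j\ge 3d_1$) is nowhere established in the paper and, even granted, only shrinks the $v$-window \emph{inside} the $u$-window, which does not remove the obstruction. So the crux you flag is a genuine gap, not a routine verification.

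The paper's own argument is much cruder and uses only the $r=1$ identity: from (\ref{j3}) and $x_j<g$ it gets $\sum_{j}y_j<{\cal B}_6g+\beta_1g=\frac{\beta_2+\beta_1+1}{2}g$, sets this against the unconditional bound $\sum_j y_j<\beta_2 g$, and requires the conditional bound not to exceed the unconditional one, i.e.\ $\frac{\beta_2+\beta_1+1}{2}\le\beta_2$, which is (\ref{j6}). (That argument is itself a consistency requirement rather than a strict deduction, but it is the route the paper takes.) If you want to salvage your moment-theoretic setup, you would need to inject information beyond the odd moments --- e.g.\ genuinely asymmetric constraints on where the first versus second syzygy degrees can sit --- before any test-function argument can see the count $\beta_2-\beta_1-1$.
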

%%%%%%%%%%%%%%%%%%%%%%%%%%%%%%%%%%%%%%%%%%%
\begin{proof}
According to the identity (\ref{j3}) and constraints on degrees $x_j$ of the 
1st syzygies (\ref{j1}) we have,
\bea
\sum_{j=1}^{\beta_2}y_j<{\cal B}_6g+\beta_1g=\frac{\beta_2+\beta_1+1}{2}g.
\label{j7}
\eea
On the other hand, there holds another constraint on degrees $y_j$ of the 2nd 
syzygies,
\bea
\sum_{j=1}^{\beta_2}y_j<\beta_2g.\label{j8}
\eea
Inequality (\ref{j8}) holds always, while inequality (\ref{j7}) is not valid
for every set $\{x_1,\ldots,x_{\beta_1}\}$, but only when (\ref{j3}) holds. In 
order to make the both inequalities consistent, we have to find a relation 
between $\beta_1$ and $\beta_2$ where both inequalities (\ref{j7}) and 
(\ref{j8}) are satisfied, even if (\ref{j7}) is stronger than (\ref{j8}). To 
provide these inequalities to be correct, it is enough to require $(\beta_2+
\beta_1+1)/2\le\beta_2$, that leads to (\ref{j6}).
\end{proof}
%%%%%%%%%%%%%%%%%%%%%%%%%%%%%%%%%%%%%%%%%%%
Another constraint for Betti's numbers $\beta_j$ follows from the general 
inequality for the sum of $\beta_j$ in the case of non-symmetric semigroups 
\cite{fe011}, Formula (1.9),
\bea
\sum_{j=0}^{m-1}\beta_j\le d_12^{m-1}-2(m-1),\qquad\beta_0=1.\label{j9}
\eea
Applying the duality relation for Betti's numbers, $\beta_j=\beta_{m-j-1}$, 
$\beta_{m-1}=1$, in symmetric semigroups ${\sf S}_6$ to inequality (\ref{j9}) 
and combining it with Lemma \ref{le1}, we obtain
\bea
\beta_1<2(4d_1-1).\label{j10}
\eea

To study polynomial identities (\ref{j3},\ref{j4},\ref{j5}) and their 
consequences, start with observation, which follows by numerical calculations 
for two real functions $R_1(z)$, $R_2(z)$ and is presented in Figure \ref{fig1},
\bea
&&R_1(z)\ge A_*R_2(z),\quad 0\le z\le 1,\qquad\mbox{where}\label{j11}\\
&&R_1(z)=z^2\sqrt{10-15z+6z^2},\quad R_2(z)=z^2(3-2z),\quad A_*=0.9682.\nonumber
\eea
The constant $A_*$ is chosen by requirement of the existence of such a 
coordinate $z_*\in[0,1]$ providing
%%%%%%%%%%%%%%%%%%%%%%%%%%%%%%%%%%%%%%%%%%%5
\begin{figure}[h!]\begin{center}\begin{tabular}{cc}
\psfig{figure=./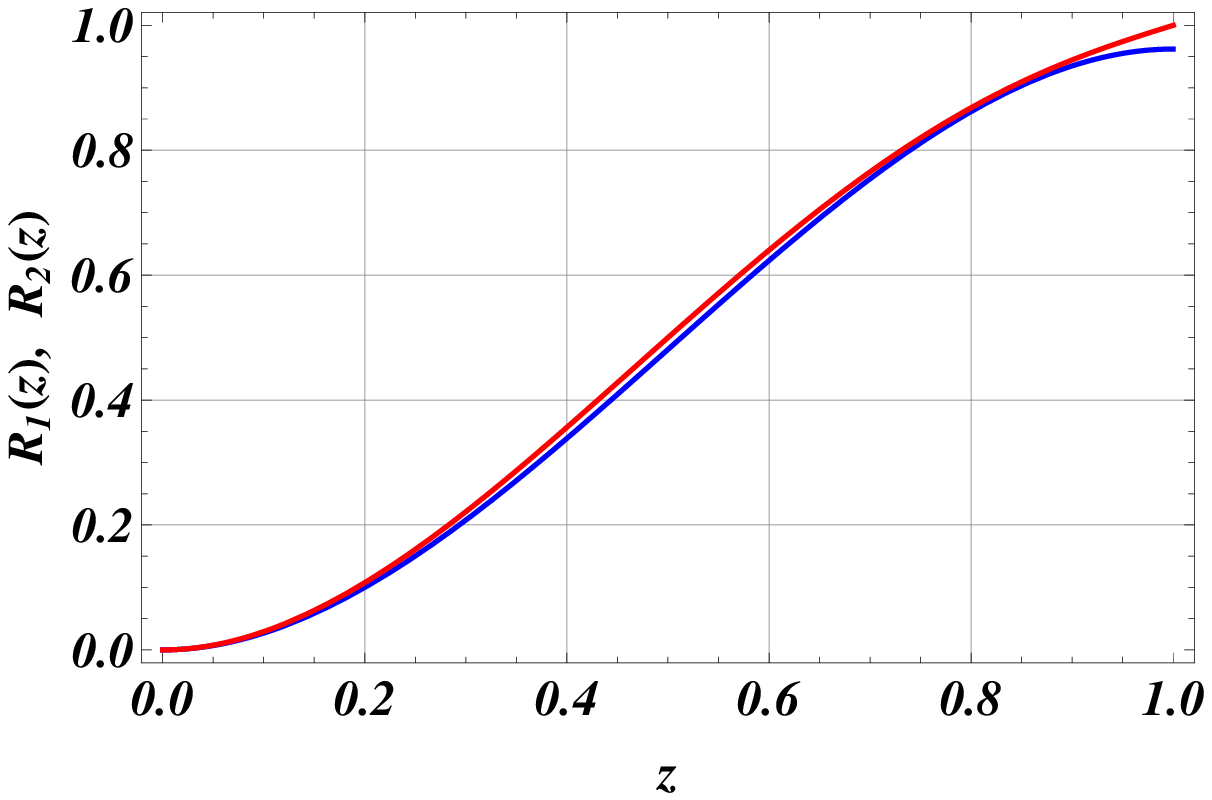,height=4.7cm} &
\psfig{figure=./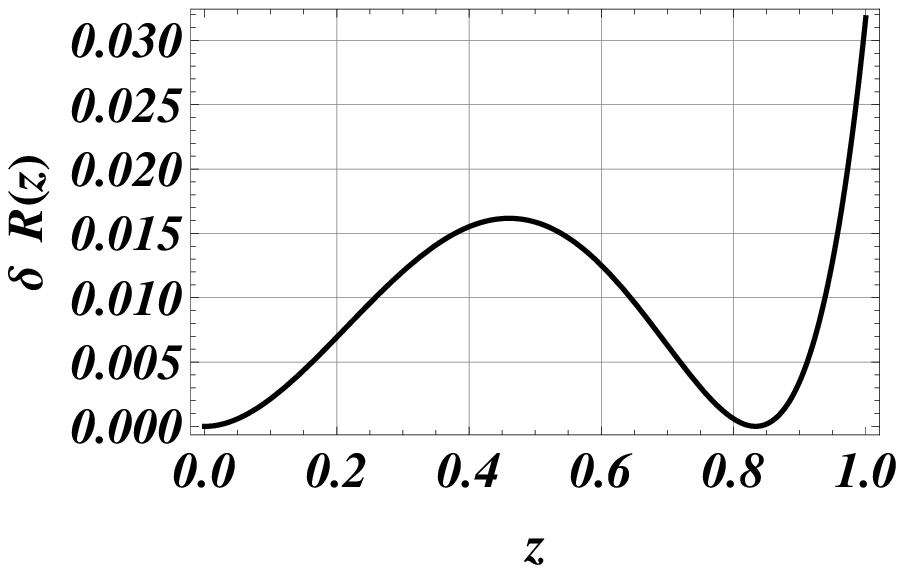,height=4.7cm}\\
(a) & (b)
\end{tabular}\end{center}
\vspace{-.5cm}
\caption{Plot of the functions (a) $R_1(z)$ {\em in red color}, $A_*R_2(z)$ 
{\em in blue color} and a discrepancy (b) $\delta R(z)=R_1(z)-A_*R_2(z)$ in the 
range $z\in[0,1]$.}\label{fig1}
\end{figure}

\noindent
two equalities,
\bea
R_1(z_*)=A_*R_2(z_*),\qquad R_1^{\prime}(z_*)=A_*R_2^{\prime}(z_*),\quad z_*
\simeq 0.8333,\quad\mbox{where}\quad R_j^{\prime}(z_*)=\frac{dR_j(z)}{dz}_{|z=
z_*}.\nonumber
\eea
Substituting $z=y_j/g$, $0\!<\!z\!<\!1$, into inequality (\ref{j11}) and making 
summation over $1\le j\le\beta_2$, we get
\bea
A_*\sum_{j=1}^{\beta_2}y_j^2(3g-2y_j)<\sum_{j=1}^{\beta_2}y_j^2\sqrt{10g^2-
15gy_j+6y_j^2}.\label{j12}
\eea
Applying the Cauchy-Schwarz inequality $\left(\sum_{j=1}^Na_jb_j\right)^2\le
\left(\sum_{j=1}^Na_j^2\right)\left(\sum_{j=1}^Nb_j^2\right)$ to the right-hand 
side of inequality (\ref{j12}), we obtain
\bea
\left(\sum_{j=1}^{\beta_2}y_j^{3/2}\sqrt{10g^2-15gy_j+6y_j^2}\;y_j^{1/2}\right)
^2\le\sum_{j=1}^{\beta_2}y_j^3(10g^2-15gy_j+6y_j^2)\sum_{j=1}^{\beta_2}y_j.
\label{j13}
\eea
Combining (\ref{j12}) and (\ref{j13}), we arrive at inequality
\bea
A_*^2\left(\sum_{j=1}^{\beta_2}y_j^2(3g-2y_j)\right)^2<\sum_{j=1}^{\beta_2}y_j
^3(10g^2-15gy_j+6y_j^2)\sum_{j=1}^{\beta_2}y_j.\label{j14}
\eea
Denote by $X_k$ the k-th power symmetric polynomial $X_k(x_1,\ldots,x_{\beta_1})
=\sum_{j=1}^{\beta_1}x_j^k$, $x_j<g$, and substitute identities 
(\ref{j3},\ref{j4},\ref{j5}) into inequality (\ref{j14}),
\bea
A_*^2\left({\cal B}_6g^3+3gX_2-2X_3\right)^2<\left({\cal B}_6g^5-360\pi_6+10g^2
X_3-15gX_4+6X_5\right)\left({\cal B}_6g+X_1\right).\label{j15}
\eea

On the other hand, similarly to inequalities (\ref{j12},\ref{j13},\ref{j14}), 
let us establish another set of inequalities for $X_k$ by replacing $y_j\to 
x_j$. We write the last of them, which is similar to (\ref{j14}),
\bea
A_*^2\left(\sum_{j=1}^{\beta_1}x_j^2(3g-2x_j)\right)^2<\sum_{j=1}^{\beta_1}
x_j^3(10g^2-15gx_j+6x_j^2)\sum_{j=1}^{\beta_1}x_j,\label{j16}
\eea
and present (\ref{j16}) in terms of $X_k$,
\bea
A_*^2\left(3gX_2-2X_3\right)^2<\left(10g^2X_3-15gX_4+6X_5\right)X_1.\label{j17}
\eea
Represent the both inequalities (\ref{j15}) and (\ref{j17}) as follows:
\bea
360\pi_6-{\cal B}_6g^5+A_*^2\frac{\left({\cal B}_6g^3+3gX_2-2X_3\right)^2}{
{\cal B}_6g+X_1}<10g^2X_3-15gX_4+6X_5,\label{j18}\\
A_*^2\frac{\left(3gX_2-2X_3\right)^2}{X_1}<10g^2X_3-15gX_4+6X_5.\label{j19}
\eea
Inequality (\ref{j19}) holds always, while inequality (\ref{j18}) is not valid 
for every set $\{x_1,\ldots,x_{\beta_1},g\}$. In order to make the both 
inequalities consistent, we have to find a range of $g$ where both inequalities 
(\ref{j18}) and (\ref{j19}) are satisfied. To provide these inequalities to be 
correct, it is enough to require that inequality (\ref{j19}) implies inequality 
(\ref{j18}), i.e.,
\bea
\frac{360\pi_6-{\cal B}_6g^5}{A_*^2}+\frac{\left({\cal B}_6g^3+3gX_2-2X_3\right)
^2}{{\cal B}_6g+X_1}<\frac{\left(3gX_2-2X_3\right)^2}{X_1}.\label{j20}
\eea
Simplifying the above expressions, we present the last inequality (\ref{j20}) 
as follows:
\bea
CX_1(X_1+{\cal B}_6g)<\left(3gX_2-2X_3-g^2X_1\right)^2,\qquad C=\frac{360\pi_6-
\alpha{\cal B}_6g^5}{A_*^2{\cal B}_6g},\label{j21}
\eea
where $\alpha=1-A_*^2\simeq 0.06259$ and ${\cal B}_6\ge 1$ due to Lemma 
\ref{le1}. An inequality (\ref{j21}) holds always if its left-hand side is 
negative, i.e., $C<0$, that results in the following constraint,
\bea
g>q_6,\qquad q_6=\sqrt[5]{\frac{360}{\alpha\;{\cal B}_6}}\sqrt[5]{\pi_6},\quad
\mbox{where}\qquad\sqrt[5]{\frac{360}{\alpha}}\simeq 5.649.\label{j22}
\eea
The lower bound $q_6$ in (\ref{j22}) provides a sufficient condition to satisfy 
the inequality (\ref{j21}). In fact, a necessary condition has to produce 
another bound $g_6<q_6$.
%%%%%%%%%%%%%%%%%%%%%%%%%%%%%%%%%%%%%%%%%%%%%%%%%
\section{The lower bound for the Frobenius numbers of semigroups ${\sf S}_6$}
\label{s3}
%%%%%%%%%%%%%%%%%%%%%%%%%%%%%%%%%%%%%%%%%%%%%%%%%
An actual lower bound of $g$ precedes that, given in (\ref{j22}), since the
inequality (\ref{j21}) may be satisfied for a sufficiently small $C>0$. To 
find it, we introduce another kind of symmetric polynomials ${\cal X}_k$: 
\bea
&&\hspace{4cm}{\cal X}_k=\sum_{i_1<i_2<\ldots<i_k}^{\beta_1}x_{i_1}x_{i_2}
\ldots x_{i_k},\nonumber\\
&&{\cal X}_0=1,\quad {\cal X}_1=\sum_{i=1}^{\beta_1}x_i,\quad {\cal X}_2=\sum_{
i<j}^{\beta_1}x_ix_j,\quad {\cal X}_3=\sum_{i<j<r}^{\beta_1}x_ix_jx_r,\quad
\ldots,\quad {\cal X}_{\beta_1}=\prod_{i=1}^{\beta_1}x_i,\nonumber
\eea
which are related to polynomials $X_k$ by the Newton recursion identities,
\bea
m{\cal X}_m=\sum_{k=1}^m(-1)^{k-1}X_k{\cal X}_{m-k},\quad\mbox{i.e.,}
\hspace{2cm}\nonumber\\
X_1={\cal X}_1,\quad X_2={\cal X}_1^2-2{\cal X}_2,\quad X_3={\cal X}_1^3-
3{\cal X}_2{\cal X}_1+3{\cal X}_3,\quad\ldots\;\;.\label{j23}
\eea
Recall the Newton-Maclaurin inequalities \cite{har59} for polynomials 
${\cal X}_k$,
\bea
\frac{{\cal X}_1}{\beta_1}\ge\left(\frac{{\cal X}_2}{{\beta_1\choose 2}}\right)
^{\frac1{2}}\ge\left(\frac{{\cal X}_3}{{\beta_1\choose 3}}\right)^{\frac1{3}}
\ge\ldots\ge\sqrt[\beta_1]{{\cal X}_{\beta_1}}.\label{j24}
\eea
Consider the master inequality (\ref{j21}) in the following form
\bea
CX_1(X_1+{\cal B}_6g)<9g^2X_2^2+4X_3^2+g^4X_1^2+4g^2X_1X_3-12gX_2X_3-6g^3X_1X_2,
\label{j25}
\eea
and substitute Newton's identities (\ref{j23}) into (\ref{j25}),
\bea
{\cal X}_1P({\cal X}_1,{\cal X}_2,{\cal X}_3)<{\cal X}_1Q_1({\cal X}_1)+
{\cal X}_2Q_2({\cal X}_1,{\cal X}_2)+{\cal X}_3Q_3({\cal X}_1,{\cal X}_2,
{\cal X}_3),\label{j26}
\eea
where
\bea
P({\cal X}_1,{\cal X}_2,{\cal X}_3)&=&4g^2{\cal X}_1{\cal X}_2+2{\cal X}_1^3
{\cal X}_2+6{\cal X}_2{\cal X}_3+6g{\cal X}_2^2+3g{\cal X}_1{\cal X}_3,
\nonumber\\
Q_1({\cal X}_1)&=&\frac1{3}{\cal X}_1^5-g{\cal X}_1^4+\frac{13}{12}g^2{\cal X}_1
^3-\frac1{2}g^3{\cal X}_1^2+\frac{g^4-C}{12}{\cal X}_1-\frac{C}{12}{\cal B}_6g,
\nonumber\\
Q_2({\cal X}_1,{\cal X}_2)&=&3g^2{\cal X}_2+3{\cal X}_2{\cal X}_1^2+
5g{\cal X}_1^3+g^3{\cal X}_1,\nonumber\\
Q_3({\cal X}_1,{\cal X}_2,{\cal X}_3)&=&3{\cal X}_3+2{\cal X}_1^3+
g^2{\cal X}_1+6g{\cal X}_2.\nonumber
\eea
Applying inequalities (\ref{j24}) to $Q_2({\cal X}_1,{\cal X}_2)$ and $Q_3(
{\cal X}_1,{\cal X}_2,{\cal X}_3)$, we obtain
\bea
Q_2({\cal X}_1,{\cal X}_2)\!\!&\!<\!&\!\!{\cal X}_1Q_{21}({\cal X}_1),\quad
Q_{21}({\cal X}_1)=3\frac{{\cal X}_1}{\beta_1^2}{\beta_1\choose 2}\left(g^2+   
{\cal X}_1^2\right)+5g{\cal X}_1^2+g^3,\nonumber\\
Q_3({\cal X}_1,{\cal X}_2,{\cal X}_3)\!&\!<\!&\!{\cal X}_1Q_{31}({\cal X}_1),
\quad Q_{31}({\cal X}_1)=3\frac{{\cal X}_1^2}{\beta_1^3}{\beta_1\choose 3}+
2{\cal X}_1^2+g^2+6g\frac{{\cal X}_1}{\beta_1^2}{\beta_1\choose 2}.\label{j27}
\eea
Substituting inequalities (\ref{j27}) into (\ref{j26}) and applying again
(\ref{j24}), we obtain
\bea
P({\cal X}_1,{\cal X}_2,{\cal X}_3)<Q_1({\cal X}_1)+\frac{{\cal X}_1^2}
{\beta_1^2}{\beta_1\choose 2}Q_{21}({\cal X}_1)+\frac{{\cal X}_1^3}{\beta_1^3} 
{\beta_1\choose 3}Q_{31}({\cal X}_1).\label{j28}
\eea
Represent the right-hand side of inequality (\ref{j28}) as a polynomial 
$E({\cal X}_1)$ of the 5th order in ${\cal X}_1$,
\bea
&&E({\cal X}_1)=\sum_{k=0}^5E_kg^{5-k}{\cal X}_1^k,\qquad\mbox{where}
\label{j29}\\
&&E_0=-\frac{{\cal B}_6Cg^{-4}}{12},\qquad E_1=\frac{1-Cg^{-4}}{12},\qquad
E_2=\frac1{\beta_1^2}{\beta_1\choose 2}-\frac1{2}=-\frac1{2\beta_1},\nonumber\\
&&E_3=\frac{3}{\beta_1^4}{\beta_1\choose 2}^2+\frac1{\beta_1^3}{\beta_1
\choose 3}+\frac{13}{12},\qquad E_4=\frac{5}{\beta_1^2}{\beta_1\choose 2}+
\frac{6}{\beta_1^5}{\beta_1\choose 2}{\beta_1\choose 3}-1,\nonumber\\
&&E_5=\frac{3}{\beta_1^4}{\beta_1\choose 2}^2+\frac{3}{\beta_1^6}{\beta_1
\choose 3}^2+\frac{2}{\beta_1^3}{\beta_1\choose 3}+\frac1{3}.\nonumber
\eea
Thus, the master inequality (\ref{j21}) reads:
\bea
P({\cal X}_1,{\cal X}_2,{\cal X}_3)<E({\cal X}_1).\label{j30}
\eea
On the other hand, applying (\ref{j24}) to the polynomial $P({\cal X}_1,
{\cal X}_2,{\cal X}_3)$, we have another inequality,
\bea
P({\cal X}_1,{\cal X}_2,{\cal X}_3)<J({\cal X}_1),\qquad 
J({\cal X}_1)=\sum_{k=3}^5J_kg^{5-k}{\cal X}_1^k,\label{j31}\\
J_5=\frac{2}{\beta_1^2}{\beta_1\choose 2}\left[1+\frac{3}{\beta_1^3}
{\beta_1\choose 3}\right],\qquad J_4=\frac{6}{\beta_1^4}{\beta_1\choose 2}^2+
\frac{3}{\beta_1^3}{\beta_1\choose 3},\qquad J_3=\frac{4}{\beta_1^2}{\beta_1
\choose 2}.\nonumber
\eea
Inequality (\ref{j31}) holds always, while inequality (\ref{j30}) is not valid 
for every set $\{x_1,\ldots,x_{\beta_1},g\}$. In order to make both inequalities
consistent, we have to find a range for $g$ where both inequalities (\ref{j30})
and (\ref{j31}) are satisfied. To provide both inequalities to be correct, it
is enough to require that (\ref{j31}) implies (\ref{j30}),
\bea
E({\cal X}_1)>J({\cal X}_1),\qquad\mbox{or}\hspace{4cm}\nonumber\\
(E_5-J_5){\cal X}_1^5+(E_4-J_4)g{\cal X}_1^4+(E_3-J_3)g^2{\cal X}_1^3+E_2g^3
{\cal X}_1^2+E_1g^4{\cal X}_1+E_0g^5>0,\label{j32}
\eea
\bea
&&E_5-J_5=\frac{3}{\beta_1^4}{\beta_1\choose 2}^2+\frac{3}{\beta_1^6}
{\beta_1\choose 3}^2+\frac{2}{\beta_1^3}{\beta_1\choose 3}+\frac1{3}-
\frac{2}{\beta_1^2}{\beta_1\choose 2}\left[1+\frac{3}{\beta_1^3}
{\beta_1\choose 3}\right]=\frac1{3\beta_1^4},\nonumber\\
&&E_4-J_4=\frac{5}{\beta_1^2}{\beta_1\choose 2}+\frac{6}{\beta_1^5}{\beta_1
\choose 2}{\beta_1\choose 3}-1-\frac{6}{\beta_1^4}{\beta_1\choose 2}^2-
\frac{3}{\beta_1^3}{\beta_1\choose 3}=-\frac1{\beta_1^3},\nonumber\\
&&E_3-J_3=\frac{3}{\beta_1^4}{\beta_1\choose 2}^2+\frac1{\beta_1^3}{\beta_1
\choose 3}+\frac{13}{12}-\frac{4}{\beta_1^2}{\beta_1\choose 2}=
\frac{13}{12\beta_1^2}.\label{j33}
\eea
Substituting expressions $E_k-J_k$, $k=3,4,5$ from (\ref{j33}) and 
$E_0,E_1,E_2$ from (\ref{j29}) into (\ref{j32}), we obtain
\bea
\frac{C}{g^4}<G(b,u),\qquad G(b,u)=\frac{u}{u+b}(1-u)^2(1-2u)^2,\quad u=
\frac{{\cal X}_1}{\beta_1g},\quad b=\frac{{\cal B}_6}{\beta_1}.\label{j34}
\eea
%%%%%%%%%%%%%%%%%%%%%%%%%%%%%%%%%%%%%%%%%%%5
\begin{figure}[h!]\begin{center}
\psfig{figure=./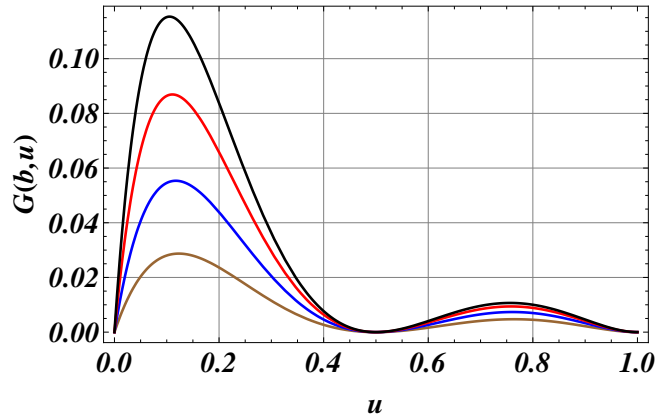,height=5.5cm}\\
\end{center}
\vspace{-.5cm}
\caption{Plot of the functions $G(b,u)$ with different $b$: ({\em in brown}) 
$b=1.75$, $u_m=0.125$; ({\em in blue}) $b=0.85$, $u_m=0.117$; ({\em in red}) 
$b=0.5$, $u_m=0.112$; ({\em in black}) $b=0.35$, $u_m=0.107$.}\label{fig2}
\end{figure}
%%%%%%%%%%%%%%%%%%%%%%%%%%%%%%%%%%%%%%%%%%%%%%%%%  
The function $G(b,u)$ is continuous (see Figure \ref{fig2}) and attains its 
global maximal value $G(b,u_m)$ at $u_m(b)\in(0,1/2)$, where $u_m=u_m(b)$ is a 
smaller positive root of cubic equation,
\bea
8u_m^3+2(5b-3)u_m^2-9bu_m+b=0,\nonumber
\eea
with asymptotic behavior of $u_m(b)$ and $G(b,u_m)$ (see Figure \ref{fig3}),
\bea
&&u_m(b)\stackrel{b\to 0}{\longrightarrow}\sqrt{\frac{b}{6}},\quad u_m(b)
\stackrel{b\to\infty}{\longrightarrow}v_1-\frac{v_2}{b},\quad v_1\!=\!\frac{9-
\sqrt{41}}{20}\simeq 0.1298,\;\;v_2\!=\!\frac{7\sqrt{41}-3}{500\sqrt{41}}
\simeq 0.013,\nonumber\\
&&G(b,u_m)\stackrel{b\to 0}{\longrightarrow}1,\quad G(b,u_m)\stackrel{b\to
\infty}{\longrightarrow}\frac{w}{b},\quad w=\frac{411+41\sqrt{41}}{12500}
\simeq 0.05388.\label{j35}
\eea
%%%%%%%%%%%%%%%%%%%%%%%%%%%%%%%%%%%%%%%%%%%%%%%%%%%%%%%%
\begin{theorem}\label{th1}
%%%%%%%%%%%%%%%%%%%%%%%%%%%%%%%%%%%%%%%%%%%%%%%%%%%%%%%%
Let a symmetric (not CI) semigroup ${\sf S}_6$ be given with its Hilbert 
series $H\left({\sf S}_6;z\right)$ in accordance with (\ref{j1}). Then the 
following inequality holds:
\bea
g>g_6,\qquad g_6=\lambda_6\sqrt[5]{\pi_6},\qquad\lambda_6=\sqrt[5]{\frac{360}
{{\cal B}_6K(b,A_*)}},\qquad K(b,A_*)=\alpha+A_*^2G(b,u_m).\label{j36}
\eea
\end{theorem}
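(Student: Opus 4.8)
The plan is to read off the theorem directly from the reduced master inequality (\ref{j34}) that the whole of Section~\ref{s3} has been building towards. All the preceding machinery---the Newton recursion (\ref{j23}), the Newton--Maclaurin inequalities (\ref{j24}), and the remarkable cancellations (\ref{j33})---has collapsed the original master inequality (\ref{j21}) into the single scalar relation $C/g^4<G(b,u)$, where $u=\mathcal{X}_1/(\beta_1 g)$ and $b=\mathcal{B}_6/\beta_1$ are fixed by the given semigroup. So my first step is simply to record that (\ref{j34}) must hold for the configuration $\{x_1,\ldots,x_{\beta_1},g\}$ realized by ${\sf S}_6$, together with the positivity facts $0<u<1$ (from $x_j<g$, hence $\mathcal{X}_1<\beta_1 g$) and $K(b,A_*)\ge\alpha=1-A_*^2>0$, which I will need at the end.

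Second, I would eliminate the unknown configuration parameter $u$. By the discussion preceding the theorem, $G(b,\cdot)$ is continuous and attains its global maximum $G(b,u_m)$ at the smaller positive root $u_m(b)\in(0,1/2)$ of the cubic $8u^3+2(5b-3)u^2-9bu+b=0$. Hence $G(b,u)\le G(b,u_m)$ for every admissible $u$, and (\ref{j34}) forces the configuration-free inequality $C/g^4<G(b,u_m)$. This is the crucial move: it converts a relation involving the unknown degrees $x_j$ into one involving only $g$, $\pi_6$, $\beta_1$ and $\mathcal{B}_6$.

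Third comes the routine algebra. Substituting $C=(360\pi_6-\alpha\mathcal{B}_6 g^5)/(A_*^2\mathcal{B}_6 g)$ gives $(360\pi_6-\alpha\mathcal{B}_6 g^5)/(A_*^2\mathcal{B}_6 g^5)<G(b,u_m)$; clearing the positive denominator $A_*^2\mathcal{B}_6 g^5$ and collecting the $g^5$ terms yields $360\pi_6<\mathcal{B}_6 g^5\bigl(\alpha+A_*^2G(b,u_m)\bigr)=\mathcal{B}_6 g^5 K(b,A_*)$. Since $K(b,A_*)>0$, this rearranges to $g^5>360\pi_6/(\mathcal{B}_6 K(b,A_*))$, i.e. $g>\lambda_6\sqrt[5]{\pi_6}=g_6$, which is exactly (\ref{j36}). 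Note the manipulation is valid irrespective of the sign of $C$: when $C\le 0$ one even obtains $g\ge q_6>g_6$ directly, consistent with $K>\alpha$ forcing $g_6<q_6$ as promised after (\ref{j22}).

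The main obstacle is not the final rearrangement but the two structural inputs it rests on. The first is the reduction to (\ref{j34}) itself: the chain (\ref{j25})--(\ref{j32}) applies (\ref{j24}) several times and then relies on the exact simplifications (\ref{j33}) (for instance $E_5-J_5=1/(3\beta_1^4)$), whose factorization into $u(1-u)^2(1-2u)^2$ is what produces the clean $G(b,u)$; any sign or coefficient slip there would destroy the bound. The second is the claim that $u_m$ is genuinely the global maximizer in $(0,1/2)$ rather than a spurious critical point or an endpoint value; establishing that the smaller positive root of the cubic maximizes $G$, and that $G(b,u_m)>0$ so that $K(b,A_*)>\alpha$, is the delicate analytic point, supported by the asymptotics (\ref{j35}) and Figure~\ref{fig2}.
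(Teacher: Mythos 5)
Your argument is correct and follows essentially the same route as the paper: take the reduced inequality (\ref{j34}), pass from $G(b,u)$ to its global maximum $G(b,u_m)$, substitute the expression for $C$ from (\ref{j21}), and rearrange to $g^5>360\pi_6/({\cal B}_6K(b,A_*))$. The only difference is that you make explicit the maximization step $G(b,u)\le G(b,u_m)$ and the positivity of $K(b,A_*)$, which the paper's two-line proof leaves implicit in the discussion preceding the theorem.
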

%%%%%%%%%%%%%%%%%%%%%%%%%%%%%%%%%%%%%%%%%%%%%%%%%%%%%%%%
\begin{proof}
Substitute into (\ref{j34}) the expression for $C$, given in (\ref{j21}), and 
arrive at inequality
\bea
\frac{360\pi_6-\alpha{\cal B}_6g^5}{A_*^2{\cal B}_6g^5}<G(b,u_m),\nonumber
\eea
which gives rise to the lower bound $g_6$ in (\ref{j36}).
\end{proof}
%%%%%%%%%%%%%%%%%%%%%%%%%%%%%%%%%%%%%%%%%%%%%%%%%%%%%%%% 
\begin{figure}[h!]\begin{center}\begin{tabular}{cc}
\psfig{figure=./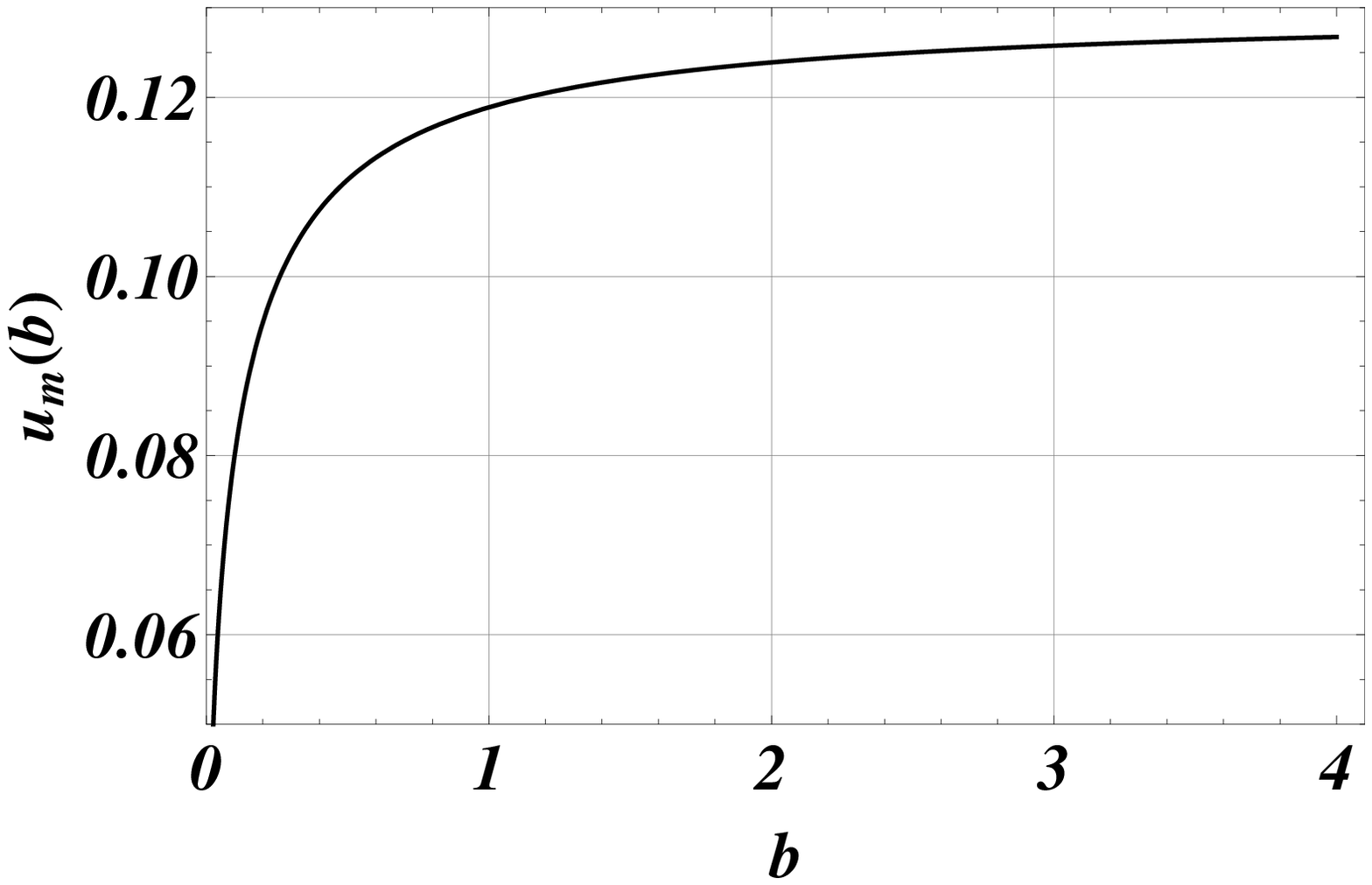,height=4.9cm} &
\psfig{figure=./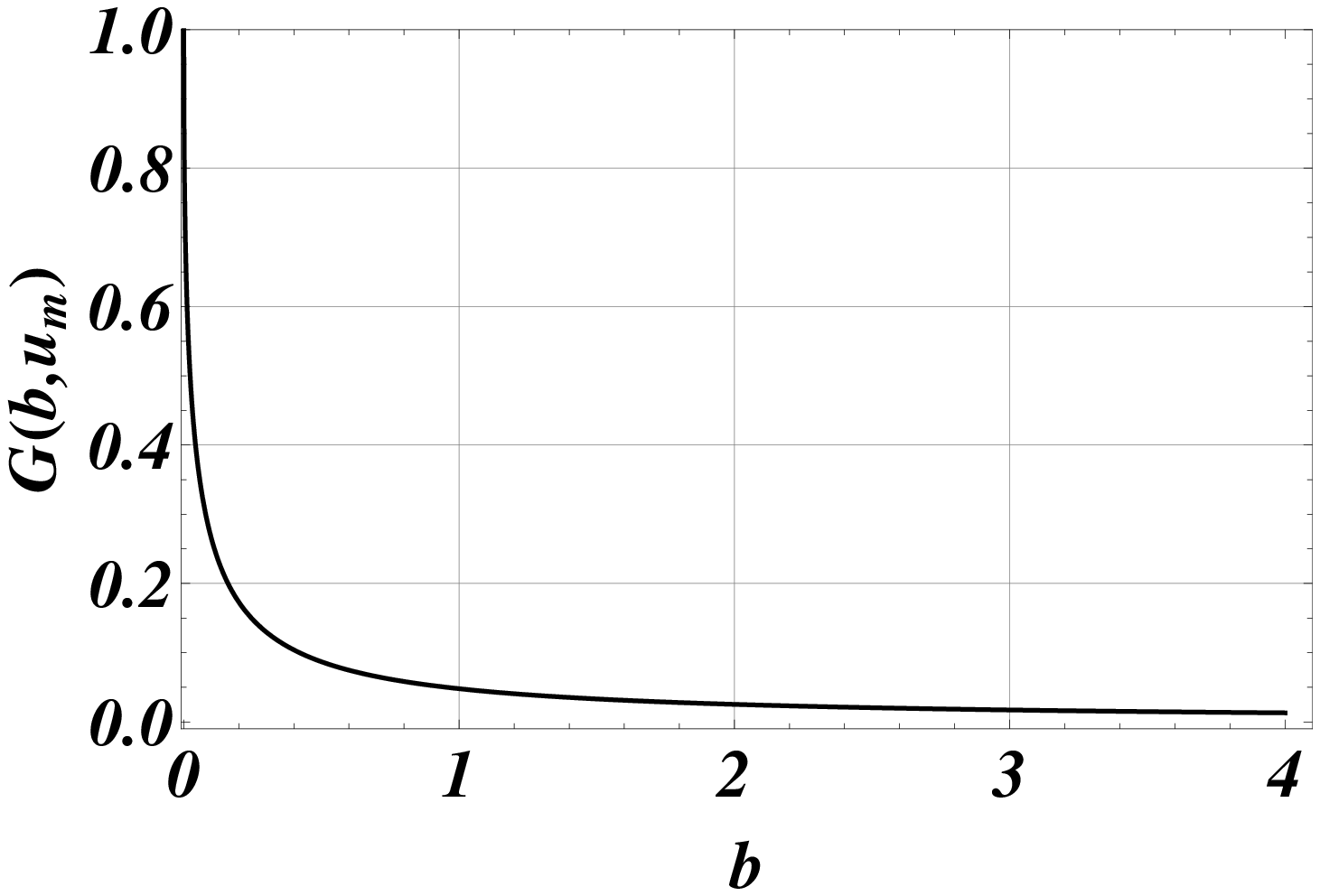,height=4.9cm}\\
(a) & (b)
\end{tabular}\end{center}
\vspace{-.5cm}
\caption{Plot of the functions (a) $u_m(b)$ and (b) $G(b,u_m)$ in a wide
range of $b$.}\label{fig3}
\end{figure}  
%%%%%%%%%%%%%%%%%%%%%%%%%%%%%%%%%%%%%%%%%%%%%%%%%%%%%%%
Formula for $\lambda_6$ in (\ref{j36}) shows a strong dependence on ${\cal B}_
6$, even the last is implicitly included into $G(b,u_m)$ by a slowly growing 
function $u_m(b)$ when $b>1$. Such dependence $\lambda_6({\cal B}_6)$ may lead 
to a very small values of $\lambda_6$ if ${\cal B}_6$ is not bounded from above,
but $b$ is fixed, and results in an asymptotic decrease of the bound, $g_6
\stackrel{{\cal B}_6\to\infty}{\longrightarrow}0$. The last limit poses a 
question: does formula (\ref{j36}) for $g_6$ contradict the known lower bound 
\cite{kil00} for the Frobenius number in the 6-generated numerical semigroups of
the arbitrary nature, i.e., not assuming their symmetricity. If the answer is 
affirmative then it arises another question: what should be required in order 
to avoid such contradiction. We address both questions in the next section in 
a slightly different form: are there any constraints on Betti's numbers.
%%%%%%%%%%%%%%%%%%%%%%%%%%%%%%%%%%%%%%%%%%%
\section{Are there any constraints on Betti's numbers of symmetric (not CI)
semigroups ${\sf S}_6$}\label{s4}
%%%%%%%%%%%%%%%%%%%%%%%%%%%%%%%%%%%%%%%%%%%%%%%%%
Denote by $\widetilde{g_6}$ and $\overline{g_6}$ the lower bounds of the 
largest degree of syzygies for non-symmetric \cite{kil00} and symmetric CI 
\cite{fe017} semigroups generated by six integers, respectively. Compare $g_6$ 
with $\widetilde{g_6}$ and $\overline{g_6}$ and require that the following 
double inequality hold:
\bea
\widetilde{g_6}<g_6<\overline{g_6},\qquad\widetilde{g_6}=\sqrt[5]{120}
\sqrt[5]{\pi_6},\quad\overline{g_6}=5\sqrt[5]{\pi_6}.\label{j37}
\eea
Substituting the expression for $g_6$ from (\ref{j36}) into (\ref{j37}), we 
obtain
\bea
\frac{72}{625}\frac1{K(b,A_*)}<{\cal B}_6<\frac{3}{K(b,A_*)},\qquad\frac{72}
{625}=0.1152,\quad K(b,A_*)\stackrel{b\to 0}{\longrightarrow}1,\quad K(b,A_*)
\stackrel{b\to\infty}{\longrightarrow}\alpha,\label{j38}
\eea
where the two limits follow by (\ref{j35},\ref{j36}). The double inequality
(\ref{j38}) determines the upper and lower bounds for varying ${\cal B}_6$ in 
the plane $(b,{\cal B}_6)$ as monotonic functions (see Figure \ref{fig4}a) 
with asymptotic behavior,
\bea
Upp.\;bound:\quad\!{\cal B}_6\stackrel{b\to 0}{\longrightarrow}3,\;\;{\cal B}_6
\stackrel{b\to\infty}{\longrightarrow}47.92;\quad Low.\;Bound:\quad\!{\cal B}_6
\stackrel{b\to 0}{\longrightarrow}0.1152,\;\;{\cal B}_6\stackrel{b\to\infty}
{\longrightarrow}1.84.\quad\label{j39}
\eea
According to Lemma \ref{le1}, the lower bound in (\ref{j38},\ref{j39}) may be 
chosen as ${\cal B}_6=1$. 

Find the constraints on Betti's numbers. For this purpose, the inequality 
(\ref{j38}) has to be replaced by
\bea
1<\beta_2-\beta_1<\frac{6}{K(b,A_*)}-1,\qquad\beta_2-\beta_1\stackrel{\beta_1
\to 0}{\longrightarrow}94.84,\quad\beta_2-\beta_1\stackrel{\beta_1\to\infty}
{\longrightarrow}5,\label{j40}
\eea
and the plot in Figure \ref{fig4}a has to be transformed by rescaling the 
coordinates $(b,{\cal B}_6)$ with inversion, $b\to \beta_1={\cal B}_6/b$, and 
shift, ${\cal B}_6\to \beta_2-\beta_1=2{\cal B}_6-1$ (see Figure \ref{fig4}b). 
Following sections \ref{s1} and \ref{s2}, the constraints (\ref{j38}) have to 
be supplemented by another double inequality $5<\beta_1<2(4d_1-1)$.
%%%%%%%%%%%%%%%%%%%%%%%%%%%%%%%%%%%%%%%%%%%
\begin{figure}[h!]\begin{center}\begin{tabular}{cc}
\psfig{figure=./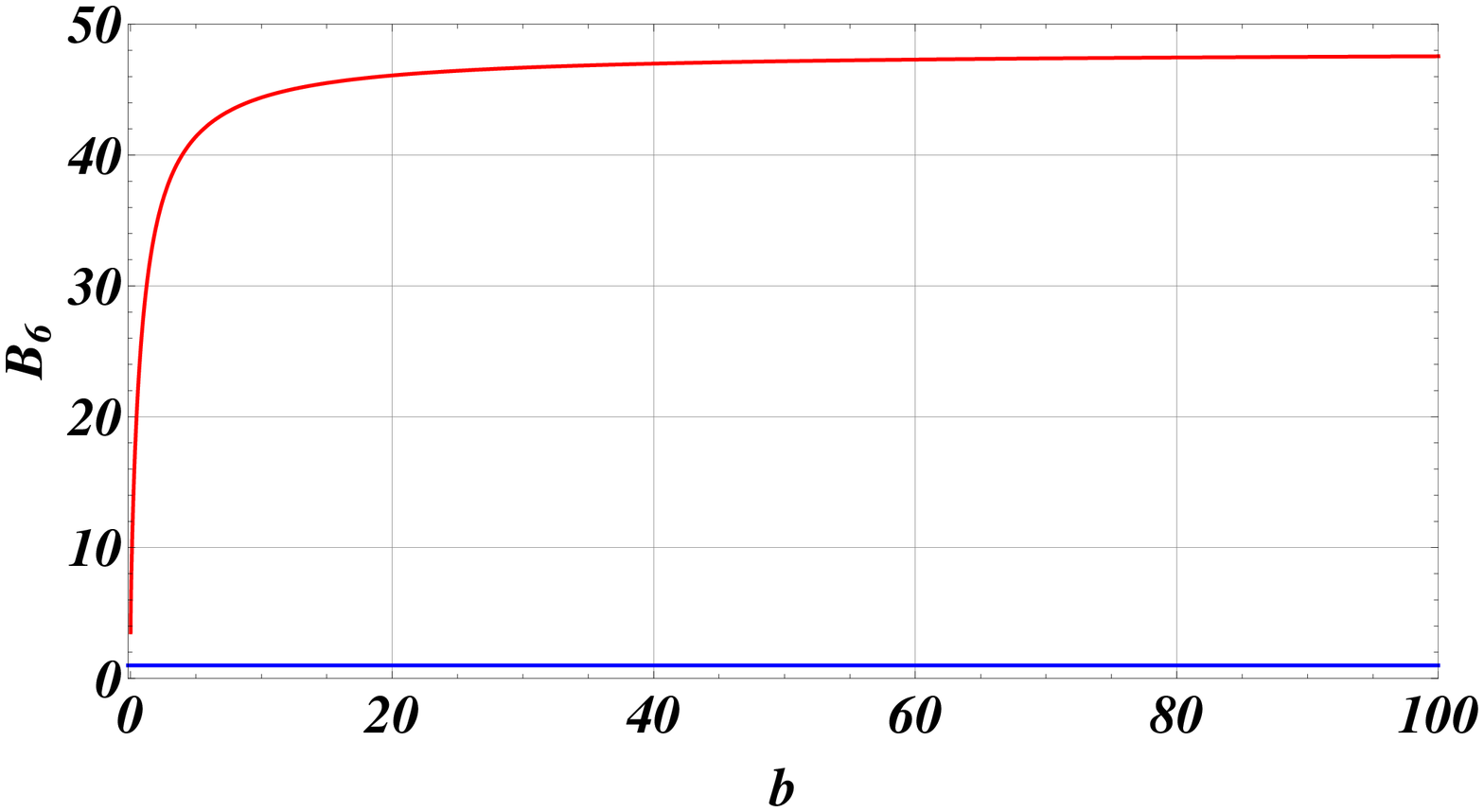,height=4.3cm}\hspace{-.5cm}&\hspace{-.5cm}
\psfig{figure=./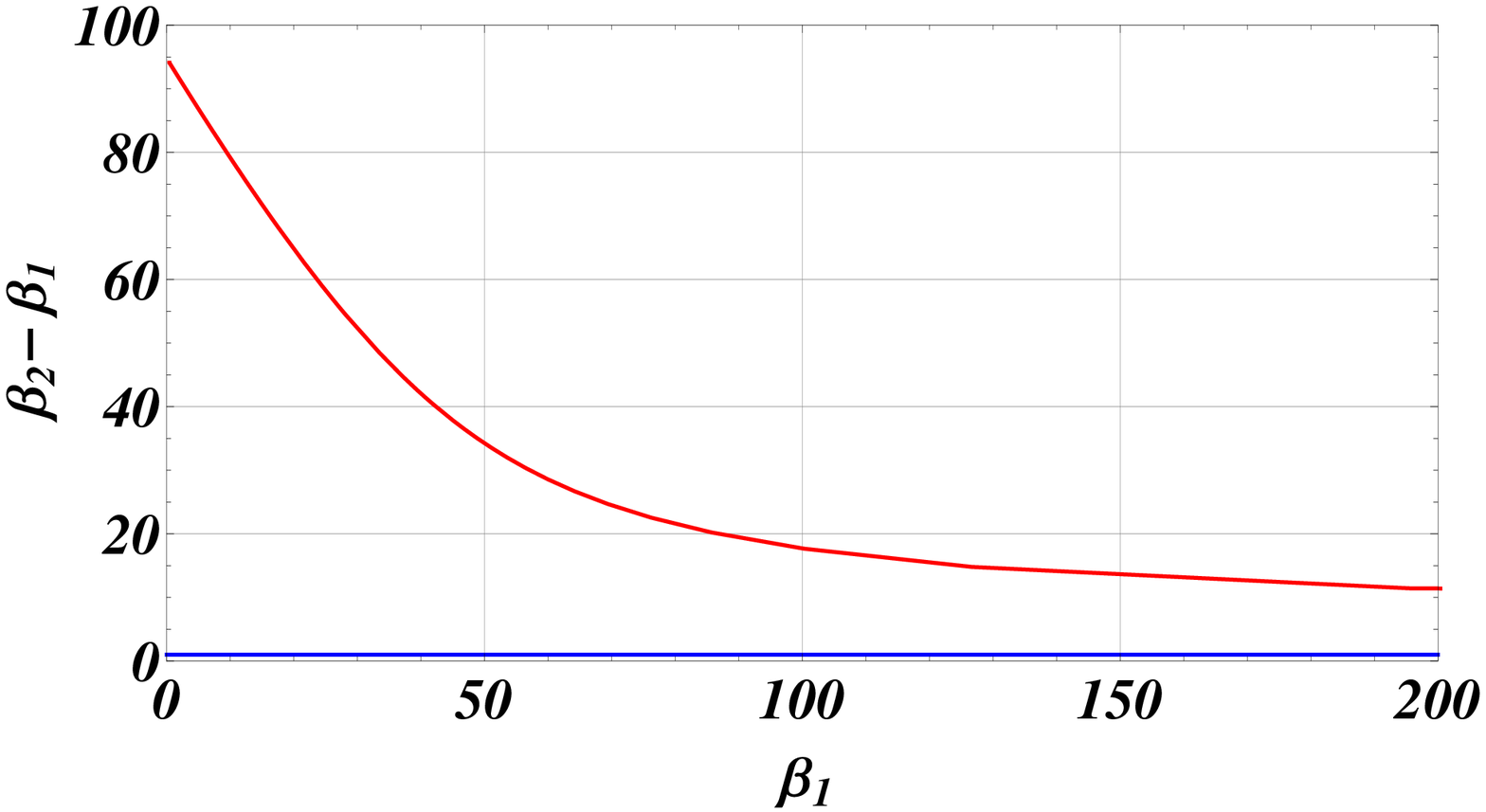,height=4.3cm}\\
(a) & (b)
\end{tabular}\end{center}
\vspace{-.5cm}
\caption{Plots of the lower ({\em blue}) and upper ({\em red}) bounds in the 
planes (a) $(b,{\cal B}_6)$ and (b) $(\beta_1,\beta_2-\beta_1)$.}\label{fig4}
\end{figure}
%%%%%%%%%%%%%%%%%%%%%%%%%%%%%%%%%%%%%%%%%%%%%%%%%

The double inequality (\ref{j40}) manifests a phenomenon, which does not exist
in symmetric (not CI) semigroups ${\sf S}_m$, generated by four \cite{fe016} 
and five \cite{fe018} integers, where inequalities $\widetilde{g_m}<g_m<
\overline{g_m}$, are always satisfied and independent of Betti's numbers 
($\beta_1=5$ for ${\sf S}_4$ and $\beta_1=\beta$ for ${\sf S}_5$):
\bea
\widetilde{g_m}<\lambda_m\sqrt[m-1]{\pi_m}<\overline{g_m},\quad\left\{
\begin{array}{l}\lambda_4=\sqrt[3]{25},\\\lambda_5=\sqrt[4]{192(\beta-1)/\beta},
\end{array}\right.\quad\left\{\begin{array}{l}\widetilde{g_m}=\sqrt[m-1]{(m-1)!
}\sqrt[m-1]{\pi_m},\\\overline{g_m}=(m-1)\sqrt[m-1]{\pi_m}.\end{array}\right.
\label{j41}
\eea
Note, that constraints (\ref{j40}) do not contradict Bresinsky's theorem 
\cite{bre75} on the arbitrary large finite value of $\beta_1$ for generic 
semigroup ${\sf S}_m$, $m\ge 4$. Below, we put forward some considerations 
about validity of (\ref{j40}) for Betti's numbers $\beta_1,\beta_2$ of 
symmetric (not CI) semigroup ${\sf S}_6$.

The double inequality (\ref{j40}) has arisen by comparison of $g_6$ with two 
other bounds $\widetilde{g_6}$ and $\overline{g_6}$ and, strictly speaking, a 
validity of (\ref{j40}) is dependent on how small is a discrepancy $\delta R(z)$
in Figure \ref{fig1}. If $\delta R(z)$ is not small enough and its neglecting 
in (\ref{j11}) is a far too rude approximation, then there may exist symmetric 
(not CI) semigroups ${\sf S}_6$ with Betti's numbers $\beta_1,\beta_2$, where 
(\ref{j40}) is broken. Such violation should indicate a necessity to improve 
the lower bound $g_6$ in (\ref{j36}) to restore the relationship $\widetilde{
g_6}<g_6<\overline{g_6}$. Note, that such improvement is very hard to provide 
even by replacing $A_*\to A$ in inequality (\ref{j12}), where $A_*<A<1$, and 
still preserving (\ref{j12}) with a new $A$. Such replacement leads again to 
(\ref{j36}) with $K(b,A)$ instead $K(b,A_*)$, i.e., the constraints on $\beta_1,
\beta_2$ still exist, even the area of admissible Betti's numbers becomes wider.

However, if there are no such symmetric (not CI) semigroups ${\sf S}_6$, where 
the double inequality (\ref{j40}) is broken, then there arises a much more deep 
question: why do the constraints on Betti's numbers exist. This problem is 
strongly related to the structure of minimal relations of the first and second 
syzygies in the minimal free resolution for the 1--dim Gorenstein (not CI) ring 
$k[{\sf S}_6]$ and has to be addressed in a separate paper.
%%%%%%%%%%%%%%%%%%%%%%%%%%%%%%%%%%%%%%%%%%%%%%%%%
\section{Symmetric (not CI) semigroups ${\sf S}_6$ with the $W$ and $W^2$ 
properties}\label{s5}
%%%%%%%%%%%%%%%%%%%%%%%%%%%%%%%%%%%%%%%%%%%%%%%%%
In \cite{fe018}, we introduced a notion of the {\em W} property for the 
$m$-generated symmetric (not CI) semigroups ${\sf S}_m$ satisfying Watanabe's 
Lemma \cite{wata73}. We recall this Lemma together with the definition of 
the {\em W} property and two other statements relevant in this section.
%%%%%%%%%%%%%%%%%%%%%%%%%%%%%%%%%%%%%%%%%%%%%%%%%%%%%%%%
\begin{lemma}\label{le2}{\rm (\cite{wata73}).}
%%%%%%%%%%%%%%%%%%%%%%%%%%%%%%%%%%%%%%%%%%%%%%%%%%%%%%%%
Let a semigroup $S_{m-1}\!=\!\la\delta_1,\ldots,\delta_{m-1}\ra$ be given and 
$a\in{\mathbb Z}$, $a>1$, such that $\gcd(a,d_m)=1$, $d_m\in S_{m-1}\!\setminus
\!\{\delta_1,\ldots,\delta_{m-1}\}$. Consider a semigroup $S_m\!=\!\la a\delta
_1,\ldots,a\delta_{m-1},d_m\ra$ and denote it by $S_m\!=\!\la aS_{m-1},d_m\ra$. 
Then $S_m$ is symmetric if and only if $S_{m-1}$ is symmetric, and $S_m$ is 
symmetric CI if and only if $S_{m-1}$ is symmetric CI.
\end{lemma}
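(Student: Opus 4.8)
The plan is to recognize the construction $S_m=\langle aS_{m-1},d_m\rangle$ as the gluing of $S_{m-1}$ with the free semigroup $\mathbb{N}=\langle 1\rangle$, performed along the element $a\in\mathbb{N}$ and the non-generator $d_m\in S_{m-1}$. The arithmetic backbone of the whole argument is a unique-decomposition statement. Because $\gcd(a,d_m)=1$, the residues $0,d_m,2d_m,\ldots,(a-1)d_m$ exhaust $\mathbb{Z}/a\mathbb{Z}$, so for every integer $s$ there is a unique $r_s\in\{0,\ldots,a-1\}$ with $s\equiv r_sd_m\pmod a$. Writing $t(s)=(s-r_sd_m)/a\in\mathbb{Z}$, I would first establish the membership criterion
\[
s\in S_m\iff t(s)\in S_{m-1},
\]
using $d_m\in S_{m-1}$ to absorb multiples of $d_m$ into the $aS_{m-1}$ part and using coprimality to force uniqueness of the residue (both sides being false whenever $t(s)<0$). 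This reduces every question about $S_m$ to a question about $S_{m-1}$, read off residue class by residue class.

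For the symmetric equivalence I would use Kunz's criterion: a numerical semigroup $S$ is symmetric iff for every $z\in\mathbb{Z}$ exactly one of $z,\,F(S)-z$ lies in $S$. From the membership criterion the Frobenius number is computed directly as $F(S_m)=aF(S_{m-1})+(a-1)d_m$, the maximum gap being attained at $t(s)=F(S_{m-1})$ and $r_s=a-1$. The key observation is that the decomposition intertwines the two reflections: if $s\leftrightarrow(r_s,t(s))$ then $F(S_m)-s\leftrightarrow(a-1-r_s,\,F(S_{m-1})-t(s))$, so the pair $\{s,\,F(S_m)-s\}$ corresponds exactly to the pair $\{t(s),\,F(S_{m-1})-t(s)\}$. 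Hence ``exactly one of $s,F(S_m)-s$ lies in $S_m$'' is equivalent to ``exactly one of $t(s),F(S_{m-1})-t(s)$ lies in $S_{m-1}$'', and letting $s$ range over $\mathbb{Z}$ (in the converse it suffices to take $r_s=0$, i.e.\ $s=at$) gives $S_m$ symmetric $\iff S_{m-1}$ symmetric.

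For the complete-intersection equivalence I would pass to Hilbert series. The unique decomposition yields the product formula
\[
H(S_m;t)=H(S_{m-1};t^a)\cdot\frac{1-t^{ad_m}}{1-t^{d_m}},
\]
and clearing the denominator $\prod_{i=1}^{m-1}(1-t^{a\delta_i})(1-t^{d_m})$, which is exactly the product over the generators of $S_m$, shows that the numerators are related by $Q_m(t)=Q_{m-1}(t^a)\,(1-t^{ad_m})$. Invoking the criterion that a numerical semigroup of embedding dimension $n$ is CI iff its Hilbert-series numerator factors as a product of $n-1$ binomials $\prod_k(1-t^{\rho_k})$, the forward direction is immediate: if $Q_{m-1}(t)=\prod_{k=1}^{m-2}(1-t^{\sigma_k})$ then $Q_m(t)=\prod_{k=1}^{m-2}(1-t^{a\sigma_k})\,(1-t^{ad_m})$ is a product of $m-1$ binomials, so $S_m$ is CI.

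The converse, $S_m$ CI $\Rightarrow S_{m-1}$ CI, is the step I expect to be the main obstacle: one must deduce from the fact that $Q_{m-1}(t^a)(1-t^{ad_m})$ is a product of $m-1$ binomials that $Q_{m-1}(t^a)$, and hence $Q_{m-1}(t)$, is a product of $m-2$ binomials. This requires a cancellation/uniqueness property for factorizations of such numerator polynomials into binomials $(1-t^{\rho})$, or equivalently the structural input from gluing theory (Delorme) that the defining ideal of the glued ring is minimally generated by the generators of $I_{S_{m-1}}$ together with a single gluing relation, so that $\mu(I_{S_m})=\mu(I_{S_{m-1}})+1$ while the embedding dimension rises by one. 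Combined with the universal bound $\mu(I)\ge\mathrm{edim}-1$, with equality characterizing CI, this preserves the CI defect and yields the converse. Since $\mathbb{N}=\langle 1\rangle$ is trivially symmetric and CI, the two equivalences together are precisely the assertion of the Lemma.
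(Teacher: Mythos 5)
The paper does not actually prove this lemma: it is quoted verbatim from Watanabe \cite{wata73} and used as a black box, so there is no internal proof to compare against. Your argument is the standard modern ``gluing'' proof of Watanabe's result, and its core is sound. The unique-decomposition claim $s=at(s)+r_sd_m$ with $r_s\in\{0,\dots,a-1\}$ is correct (absorbing $ad_m\in aS_{m-1}$ reduces the $d_m$-coefficient below $a$, and coprimality pins down the residue), the membership criterion $s\in S_m\iff t(s)\in S_{m-1}$ follows, the Frobenius formula $F(S_m)=aF(S_{m-1})+(a-1)d_m$ (which is exactly the paper's equation (\ref{j44}), cited to \cite{bra62}) drops out, and your observation that the decomposition intertwines the reflections $s\mapsto F(S_m)-s$ and $t\mapsto F(S_{m-1})-t$ gives a complete and clean proof of the symmetry equivalence in both directions. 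The Hilbert-series identity $Q_m(t)=Q_{m-1}(t^a)(1-t^{ad_m})$ and the forward CI implication are likewise correct.

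The one soft spot is the converse CI implication, which you candidly flag. The first route you mention (uniqueness of factorization of the numerator into binomials $1-t^{\rho}$) is not something you can wave at: such factorizations are not obviously unique without further argument, and making that precise is essentially as hard as the statement itself. The second route is the right one: Delorme's theorem that for a gluing the defining ideal satisfies $\mu(I_{S_m})=\mu(I_{S_{m-1}})+1$, combined with $\mu(I)\ge \mathrm{edim}-1$ with equality characterizing CI in the one-dimensional semigroup-ring setting, does close the argument --- but it is an external input of comparable depth to Watanabe's lemma, so your proof is complete only modulo that citation. You should also note (one line) that $\{a\delta_1,\dots,a\delta_{m-1},d_m\}$ is a minimal generating set of $S_m$, since the $\mu$ versus $\mathrm{edim}$ bookkeeping silently assumes $\mathrm{edim}(S_m)=m$; this follows from $a\nmid d_m$ and the unique-decomposition lemma. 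With those two points acknowledged, the proposal is a correct and self-contained replacement for the citation, which is more than the paper itself offers.
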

%%%%%%%%%%%%%%%%%%%%%%%%%%%%%%%%%%%%%%%%%%%%%%%%%%%%%%%%
\begin{corollary}\label{co1}{\rm (\cite{fe018}).}
%%%%%%%%%%%%%%%%%%%%%%%%%%%%%%%%%%%%%%%%%%%%%%
Let a semigroup $S_{m-1}\!=\!\la\delta_1,\ldots,\delta_{m-1}\ra$ be given and 
$a\in{\mathbb Z}$, $a>1$, such that $\gcd(a,d_m)=1$, $d_m\in S_{m-1}\!\setminus
\!\{\delta_1,\ldots,\delta_{m-1}\}$. Consider a semigroup $S_m\!=\!\!\la aS_
{m-1},d_m\ra$. Then $S_m$ is symmetric (not CI) if and only if $S_{m-1}$ is 
symmetric (not CI).
\end{corollary}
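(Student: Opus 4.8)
The plan is to derive Corollary~\ref{co1} directly from Watanabe's Lemma~\ref{le2} by logical manipulation, since the Corollary is essentially the ``not CI'' restatement of the Lemma. Lemma~\ref{le2} gives two biconditionals under the same hypotheses on $S_m=\la aS_{m-1},d_m\ra$: first, that $S_m$ is symmetric if and only if $S_{m-1}$ is symmetric; and second, that $S_m$ is symmetric CI if and only if $S_{m-1}$ is symmetric CI. The goal is to show $S_m$ is symmetric-but-not-CI exactly when $S_{m-1}$ is symmetric-but-not-CI, so the whole argument is a short exercise in combining these two equivalences.

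First I would fix the hypotheses of Lemma~\ref{le2}, namely $a>1$, $\gcd(a,d_m)=1$, and $d_m\in S_{m-1}\setminus\{\delta_1,\ldots,\delta_{m-1}\}$, which are identical to those of the Corollary, so that both biconditionals are available. The property ``symmetric (not CI)'' is the conjunction of ``symmetric'' and ``not CI,'' where here ``not CI'' should be read as ``not a symmetric CI'' (equivalently, symmetric but not a complete intersection). I would therefore prove the two directions by assuming one side and applying the two equivalences of the Lemma in turn.

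For the forward direction, suppose $S_m$ is symmetric and not CI. By the first equivalence of Lemma~\ref{le2}, symmetry of $S_m$ yields symmetry of $S_{m-1}$. For the CI part, I argue by contraposition: if $S_{m-1}$ were a symmetric CI, then the second equivalence of Lemma~\ref{le2} would force $S_m$ to be a symmetric CI, contradicting the assumption that $S_m$ is not CI. Hence $S_{m-1}$ is symmetric but not CI. The reverse direction is entirely symmetric: if $S_{m-1}$ is symmetric (not CI), the first equivalence gives that $S_m$ is symmetric, and if $S_m$ were a symmetric CI then the second equivalence would force $S_{m-1}$ to be a symmetric CI, again a contradiction. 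Combining the two directions gives the stated biconditional.

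There is essentially no serious obstacle here, since the statement is a purely formal consequence of the two biconditionals in Watanabe's Lemma; the only point requiring care is the bookkeeping of what ``not CI'' means, specifically that one must take ``not CI'' to mean ``not a \emph{symmetric} CI'' so that the second equivalence of Lemma~\ref{le2} applies cleanly under the standing assumption of symmetry. The mild subtlety to check is that the CI condition is applied only after symmetry has been established on the relevant side, so that the hypothesis of the second equivalence is genuinely met; once symmetry is in hand on both sides, the contrapositive arguments close immediately. Thus the proof reduces to quoting Lemma~\ref{le2} twice and intersecting the conclusions.
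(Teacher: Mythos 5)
Your argument is correct: the corollary is indeed just the formal combination of the two biconditionals in Watanabe's Lemma~\ref{le2}, and your contrapositive handling of the ``not CI'' clause (reading it as ``not a symmetric CI'' once symmetry is established on the relevant side) is exactly the right bookkeeping. The paper itself states Corollary~\ref{co1} without proof, citing \cite{fe018}, and your derivation is the evident intended one, so there is nothing to correct.
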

%%%%%%%%%%%%%%%%%%%%%%%%%%%%%%%%%%%%%%%%%%%%%%%%%%%%%%%%
\begin{definition}\label{de1}{\rm (\cite{fe018}).}
{\rm A symmetric (not CI) semigroup $S_m$ has {\em the property W} if there
exists another symmetric (not CI) semigroup $S_{m-1}$ giving rise to $S_m$     
by the construction, described in Corollary \ref{co1}.}
\end{definition}
%%%%%%%%%%%%%%%%%%%%%%%%%%%%%%%%%%%%%%%%%%%%%%%%%%%%%%%%
\begin{theorem}\label{th2}{\rm (\cite{fe018}).}
%%%%%%%%%%%%%%%%%%%%%%%%%%%%%%%%%%%%%%%%%%%%%%%%%%%%%%%%
A minimal {\em edim} of symmetric (not CI) semigroup $S_m$ with the property
{\em W} is $m=5$.
\end{theorem}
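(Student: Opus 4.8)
The plan is to split the statement into a lower bound, $m\ge 5$, forced by the constraint that the construction of Corollary \ref{co1} places on the predecessor semigroup, and an achievability part exhibiting an explicit symmetric (not CI) semigroup $S_5$ carrying the property \emph{W}.

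For the lower bound, assume $S_m$ is symmetric (not CI) and enjoys the property \emph{W}. By Definition \ref{de1} there exist a semigroup $S_{m-1}=\la\delta_1,\ldots,\delta_{m-1}\ra$, an integer $a>1$, and an element $d_m$ as in Corollary \ref{co1}, with $S_m=\la aS_{m-1},d_m\ra$. Corollary \ref{co1} forces $S_{m-1}$ to be itself symmetric (not CI). The decisive input is the classical fact that a symmetric (not CI) numerical semigroup must have embedding dimension at least $4$: for $m\le 2$ every numerical semigroup is a complete intersection, while (Herzog) every $3$-generated symmetric semigroup is a complete intersection, so no symmetric (not CI) semigroup exists with $\mathrm{edim}\le 3$. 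Applying this to $S_{m-1}$ gives $m-1\ge 4$, hence $m\ge 5$; in particular $m=4$ is excluded precisely because property \emph{W} would then demand a symmetric (not CI) semigroup $S_3$, which cannot exist.

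For achievability at $m=5$, I would start from a concrete symmetric (not CI) semigroup $S_4=\la\delta_1,\delta_2,\delta_3,\delta_4\ra$, whose existence is the minimal-edim case studied in \cite{fe016}. I then choose $a>1$ and a sufficiently large $d_5\in S_4\setminus\{\delta_1,\ldots,\delta_4\}$ with $\gcd(a,d_5)=1$, and form $S_5=\la aS_4,d_5\ra$. Since $\gcd(\delta_1,\ldots,\delta_4)=1$ one has $\gcd(a\delta_1,\ldots,a\delta_4,d_5)=\gcd(a,d_5)=1$, so $S_5$ is a numerical semigroup; moreover the five displayed generators are minimal, since $\gcd(a,d_5)=1$ keeps $d_5$ out of the all-multiples-of-$a$ subsemigroup $aS_4$, and a congruence argument modulo $a$ (using that $d_5$ exceeds each $\delta_i$) shows no $a\delta_i$ is redundant, whence $\mathrm{edim}(S_5)=5$. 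Corollary \ref{co1} makes $S_5$ symmetric (not CI), and Definition \ref{de1} then certifies that $S_5$ has the property \emph{W}. Combined with the lower bound, this pins $m=5$ as the minimal edim.

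The step I expect to be the main obstacle is the lower-bound input, namely the nonexistence of symmetric (not CI) semigroups below embedding dimension $4$. Its delicate case is $\mathrm{edim}=3$: the assertion that every $3$-generated symmetric semigroup is automatically a complete intersection is Herzog's theorem, and it is exactly this low-edim rigidity that, propagated through Corollary \ref{co1}, rules out $m=4$ for the property \emph{W}. By contrast the achievability direction is essentially routine, the only genuine checks being minimality of the constructed generating set and that its $\gcd$ equals one, both following from $\gcd(a,d_5)=1$ and the choice $d_5\notin\{\delta_1,\ldots,\delta_4\}$.
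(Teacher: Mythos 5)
The paper states Theorem~\ref{th2} without proof, importing it from \cite{fe018}, so there is no in-paper argument to compare against; judged on its own terms, your proposal is correct and is the natural route. The lower bound $m\ge 5$ is exactly as you say: property \emph{W} forces, via Corollary~\ref{co1}, the existence of a symmetric (not CI) predecessor $S_{m-1}$, and no such semigroup exists in embedding dimension $\le 3$ (two-generated semigroups are CI, and Herzog's theorem \cite{herz70} makes every three-generated symmetric semigroup CI), so $m-1\ge 4$. Your achievability step is also sound: gluing a symmetric (not CI) four-generated semigroup (these exist, cf.\ \cite{fe016}) with $a>1$ and a sufficiently large $d_5\in S_4$ coprime to $a$ produces, by Corollary~\ref{co1}, a symmetric (not CI) $S_5$ with property \emph{W}, and your congruence argument modulo $a$ correctly guarantees that the five generators are minimal so that the embedding dimension is genuinely $5$. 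The only point worth making explicit is the existence of a suitable $d_5$ (a large enough element of $S_4$ coprime to $a$ and distinct from the $\delta_i$), which is immediate since $S_4$ contains all integers beyond its conductor.
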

%%%%%%%%%%%%%%%%%%%%%%%%%%%%%%%%%%%%%%%%%%%%%%%%%%%%%%%%
In this section we study the symmetric (not CI) semigroups $S_6$ satisfying 
Watanabe's Lemma \cite{wata73}. To distinguish such semigroups from the 
rest of symmetric (not CI) semigroups $S_6$ without the property {\em W} we 
denote them by ${\sf W}_6$.
%%%%%%%%%%%%%%%%%%%%%%%%%%%%%%%%%%%%%%%%%%%%%%%%%%%%%%%%
\begin{lemma}\label{le3}
Let two symmetric (not CI) semigroups ${\sf W}_6\!=\!\la a{\sf S}_5,d_6\ra$ and 
${\sf S}_5\!=\!\la q_1,\ldots,q_5\ra$ be given and $\gcd(a,d_6)\!=\!1$, $d_6\in
{\sf S}_5\!\setminus\!\{q_1,\ldots,q_5\}$. Let the lower bound $F_{6w}$ of the 
Frobenius number $F({\sf W}_6)$ of the semigroup ${\sf W}_6$ be represented as, 
$F_{6w}\!=\!g_{6w}-\left(a\sum_{j=1}^5q_j+d_6\right)$. Then
\vspace{-.2cm}
\bea
g_{6w}=a\left(\lambda_5\sqrt[4]{\pi_5(q)}+d_6\right),\qquad\pi_5(q)=\prod_{j=1}
^5q_j.\label{j42}
\eea
where $\lambda_5$ is defined in (\ref{j41}).
\end{lemma}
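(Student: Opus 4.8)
The plan is to exploit the Watanabe gluing structure of ${\sf W}_6=\la a{\sf S}_5,d_6\ra$ to relate its largest syzygy degree $g({\sf W}_6)$ directly to that of ${\sf S}_5$, and only then to insert the already-established $m=5$ lower bound from (\ref{j41}). First I would record the gluing structure explicitly. Since $d_6\in{\sf S}_5$, write $d_6=\sum_{j=1}^5c_jq_j$; then $a d_6=\sum_j c_j(aq_j)$ produces a single new binomial relation of degree $ad_6$ which, adjoined to the defining ideal of $k[a{\sf S}_5]$, presents $k[{\sf W}_6]$. The defining feature of a gluing (here guaranteed by $a>1$, $\gcd(a,d_6)=1$, and $d_6$ not a minimal generator of ${\sf S}_5$, exactly the hypotheses of Corollary \ref{co1}) is that this binomial is a non-zero-divisor. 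Hence the minimal free resolution of $k[{\sf W}_6]$ is the tensor product of the resolution of $k[a{\sf S}_5]$ with the length-one Koszul complex on that binomial.

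Second, I would read off the top degree from the resulting Hilbert-series factorization. Because $k[a{\sf S}_5]\cong k[{\sf S}_5]$ as graded rings with all degrees scaled by $a$, the numerator of the Hilbert series of $a{\sf S}_5$ is $Q_5(t^a)$, and the Koszul tensoring multiplies it by $(1-t^{ad_6})$, giving
\[
Q_6(t)=Q_5(t^a)\,(1-t^{ad_6}).
\]
The top term of $Q_5$ has sign $(-1)^4=+1$ (projective dimension $4$ for $edim=5$), so the top term of $Q_5(t^a)(1-t^{ad_6})$ is $-t^{a(g({\sf S}_5)+d_6)}$, whose sign $-1$ matches the $-t^{g}$ term prescribed in (\ref{j1}) for $edim=6$; in particular no cancellation of leading terms occurs. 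Therefore
\[
g({\sf W}_6)=a\bigl(g({\sf S}_5)+d_6\bigr).
\]
As an independent cross-check, the same identity follows from the Watanabe/gluing Frobenius formula $F({\sf W}_6)=aF({\sf S}_5)+(a-1)d_6$ together with the symmetric relation $F=g-\sigma_1$ and $\sigma_1({\sf W}_6)=a\sum_j q_j+d_6$.

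Finally, I would substitute the $m=5$ lower bound. By (\ref{j41}) one has $g({\sf S}_5)\ge\lambda_5\sqrt[4]{\pi_5(q)}$; since $g({\sf W}_6)=a(g({\sf S}_5)+d_6)$ is increasing in $g({\sf S}_5)$ and $a>0$, this yields the lower bound $g_{6w}=a\bigl(\lambda_5\sqrt[4]{\pi_5(q)}+d_6\bigr)$, which is precisely (\ref{j42}), with the corresponding $F_{6w}=g_{6w}-(a\sum_jq_j+d_6)$ as stated. The main obstacle is the rigorous justification of the resolution-level factorization $Q_6(t)=Q_5(t^a)(1-t^{ad_6})$ — i.e.\ that the gluing relation is a non-zero-divisor, so that the resolution tensors cleanly with the Koszul complex — together with the leading-term/parity check guaranteeing no cancellation; once the identity $g({\sf W}_6)=a(g({\sf S}_5)+d_6)$ is in hand, the remainder is a direct substitution of the known $m=5$ bound.
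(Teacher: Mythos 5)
Your proposal is correct, and its primary route is genuinely different from the paper's. The paper's proof never touches the resolution: it simply takes the $m=5$ lower bound $F({\sf S}_5)\ge h_5-\sum_{j=1}^5q_j$ with $h_5=\lambda_5\sqrt[4]{\pi_5(q)}$ from \cite{fe018}, invokes the Brauer--Shockley relation $F({\sf W}_6)=aF({\sf S}_5)+(a-1)d_6$ from \cite{bra62}, and substitutes $F({\sf W}_6)=g-\bigl(a\sum_jq_j+d_6\bigr)$ to read off $g=a(h_5+d_6)$ --- i.e.\ exactly what you relegate to your ``independent cross-check.'' Your main argument instead derives the underlying degree identity $g({\sf W}_6)=a\bigl(g({\sf S}_5)+d_6\bigr)$ structurally, from the gluing factorization $Q_6(t)=Q_5(t^a)(1-t^{ad_6})$ of the Hilbert-series numerator, with the sign/parity check ruling out cancellation of the top term. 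That route buys more than is needed here: it identifies \emph{all} syzygy degrees of ${\sf W}_6$ in terms of those of ${\sf S}_5$, not just the largest one, and makes transparent why the Betti numbers behave as they do under the $W$ construction. The cost is that you must justify that the gluing binomial is a non-zero-divisor so the resolution tensors with the Koszul complex --- a standard fact for gluings, but one the paper sidesteps entirely by citing the Frobenius-number formula. Both routes then finish identically, by monotonicity in $g({\sf S}_5)$ and substitution of the $m=5$ bound from (\ref{j41}).
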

%%%%%%%%%%%%%%%%%%%%%%%%%%%%%%%%%%%%%%%%%%%%%%%%%%%%%%%%
\begin{proof}
Consider a symmetric (not CI) numerical semigroup ${\sf S}_5$ generated by five
integers (without the $W$ property), and apply the recent result \cite{fe018} 
on the lower bound $F_5$ of its Frobenius number, $F({\sf S}_5)$,
\bea
F({\sf S}_5)\ge F_5,\qquad F_5=h_5-\sum_{j=1}^5q_j,\qquad h_5=\lambda_5
\sqrt[4]{\pi_5(q)}.\label{j43}
\eea
The following relationship between the Frobenius numbers $F({\sf W}_6)$ and
$F({\sf S}_5)$ was derived in \cite{bra62}:
\bea
F({\sf W}_6)=aF({\sf S}_5)+(a-1)d_6.\label{j44}
\eea
Substituting $F({\sf W}_6)\!=\!g\!-\!\left(a\sum_{j=1}^5q_j+d_6\right)$ and the 
representation (\ref{j43}) for $F({\sf S}_5)$ into (\ref{j44}), we obtain
\bea
g-a\sum_{j=1}^5q_j-d_6=ah_5-a\sum_{j=1}^5q_j+(a-1)d_6\qquad\rightarrow\qquad 
g=a(h_5+d_6).\label{j45}
\eea
Comparing the last equality in (\ref{j45}) with the lower bound of $h_5$ in
(\ref{j43}), we arrive at (\ref{j42}).
\end{proof}
%%%%%%%%%%%%%%%%%%%%%%%%%%%%%%%%%%%%%%%%%%%%%%%%%%%%%%%%
Following Corollary \ref{co1}, let us apply the construction of a symmetric 
(not CI) semigroup $S_m$ with the $W$ property to a symmetric (not CI) semigroup
$S_{m-1}$, which already has such property.
%%%%%%%%%%%%%%%%%%%%%%%%%%%%%%%%%%%%%%%%%%%%%%%%%%%%%%%%
\begin{definition}\label{de2}
{\rm A symmetric (not CI) semigroup $S_m$ has {\em the property $W^2$} if there
exist two symmetric (not CI) semigroup $S_{m-1}=\la q_1,\ldots,q_{m-1}\ra$ and 
$S_{m-2}=\la p_1,\ldots,p_{m-2}\ra$ giving rise to $S_m$ by the construction, 
described in Corollary \ref{co1},
\bea
&&S_m\!=\!\la a_1S_{m-1},d_m\ra,\hspace{1.1cm}d_m\in S_{m-1}\!\setminus\!\{q_1,
\ldots,q_{m-1}\},\qquad\gcd(a_1,d_m)\!=\!1,\nonumber\\
&&S_{m-1}\!=\!\la a_2S_{m-2},q_{m-1}\ra,\quad q_{m-1}\in S_{m-2}\!\setminus\!
\{p_1,\ldots,p_{m-2}\},\quad\gcd(a_2,q_{m-1})\!=\!1.\nonumber
\eea}
\end{definition}
%%%%%%%%%%%%%%%%%%%%%%%%%%%%%%%%%%%%%%%%%%%%%%%%%%%%%%%%
\begin{theorem}\label{th3}
%%%%%%%%%%%%%%%%%%%%%%%%%%%%%%%%%%%%%%%%%%%%%%%%%%%%%%%%
A minimal {\em edim} of symmetric (not CI) semigroup $S_m$ with the property
{\em $W^2$} is $m=6$.
\end{theorem}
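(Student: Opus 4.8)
The plan is to prove the statement in two parts: a lower bound showing that property $W^2$ forces $m\ge 6$, and an existence argument exhibiting such a semigroup at $m=6$. The observation driving both parts is that Definition \ref{de2} decomposes into two nested instances of the property $W$. Indeed, the first relation $S_m=\la a_1S_{m-1},d_m\ra$ witnesses, via Definition \ref{de1}, that $S_m$ has property $W$ (through the symmetric (not CI) semigroup $S_{m-1}$), while the second relation $S_{m-1}=\la a_2S_{m-2},q_{m-1}\ra$ witnesses, again via Definition \ref{de1}, that $S_{m-1}$ itself has property $W$ (through the symmetric (not CI) semigroup $S_{m-2}$). Thus $S_m$ has property $W^2$ if and only if $S_m$ has property $W$ and its intermediate semigroup $S_{m-1}$ also has property $W$.

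For the lower bound I would argue as follows. Suppose $S_m$ has property $W^2$. By the decomposition above, $S_{m-1}$ is a symmetric (not CI) semigroup of $edim=m-1$ that possesses property $W$. Theorem \ref{th2} asserts that the minimal $edim$ of any symmetric (not CI) semigroup with property $W$ is $5$, whence $m-1\ge 5$, i.e. $m\ge 6$. This rules out $m=5$ (and a fortiori $m\le 4$): although a symmetric (not CI) $S_4$ exists, it cannot carry property $W$ and so cannot serve as the intermediate semigroup $S_{m-1}$ demanded by Definition \ref{de2}.

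For existence at $m=6$ I would invoke Theorem \ref{th2} in its constructive direction and then apply Corollary \ref{co1} once more. Theorem \ref{th2} furnishes a symmetric (not CI) semigroup $S_5=\la a_2S_4,q_5\ra$ with property $W$, arising from a symmetric (not CI) semigroup $S_4$. I would then choose any non-generator element $d_6\in S_5\setminus\{q_1,\ldots,q_5\}$ (such elements exist in abundance, e.g. $d_6=2q_1$, or any sufficiently large member of $S_5$) together with an integer $a_1>1$ satisfying $\gcd(a_1,d_6)=1$ (any prime not dividing $d_6$ suffices). By Corollary \ref{co1} the semigroup $S_6=\la a_1S_5,d_6\ra$ is again symmetric (not CI), and by construction it satisfies Definition \ref{de2} with witnesses $S_5$ and $S_4$. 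Hence $S_6$ has property $W^2$, so the value $m=6$ is attained.

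Combining the two parts yields that the minimal $edim$ with property $W^2$ is exactly $m=6$. The one place deserving care is the reduction step: one must verify that the second relation in Definition \ref{de2} is precisely the hypothesis of Definition \ref{de1} applied to $S_{m-1}$, so that $S_{m-1}$ genuinely \emph{inherits} property $W$ rather than merely being symmetric (not CI). Once this identification is secured, the lower bound is an immediate corollary of Theorem \ref{th2}, and the only remaining (routine) obstacle is confirming that the final Watanabe step can always be carried out, i.e. that admissible $d_6$ and $a_1$ always exist, which they do.
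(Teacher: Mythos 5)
Your proof is correct and is essentially the paper's own argument: the paper's proof consists of the single sentence ``combine Definition \ref{de2} and Theorem \ref{th2},'' and your write-up simply makes explicit the two steps implicit in that remark (the intermediate semigroup $S_{m-1}$ must itself have property $W$, forcing $m-1\ge 5$ by Theorem \ref{th2}, and one further application of Corollary \ref{co1} to a property-$W$ semigroup $S_5$ realizes $m=6$). No discrepancy to report.
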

%%%%%%%%%%%%%%%%%%%%%%%%%%%%%%%%%%%%%%%%%%%%%%%%%%%%%%%%
\begin{proof}
This statement follows if we combine Definition \ref{de2} and Theorem \ref{th2}.
\end{proof}
%%%%%%%%%%%%%%%%%%%%%%%%%%%%%%%%%%%%%%%%%%%%%%%%%%%%%%%%
In this section we denote the symmetric (not CI) semigroups $S_6$ with the 
property {\em $W^2$} by ${\sf W}^2_6$.
%%%%%%%%%%%%%%%%%%%%%%%%%%%%%%%%%%%%%%%%%%%%%%%%%%%%%%%%
\begin{lemma}\label{le4}
Let three symmetric (not CI) semigroups ${\sf W}^2_6=\la a_1{\sf W}_5,d_6\ra$, 
${\sf W}_5=\la a_2{\sf S}_4,q_5\ra$, and ${\sf S}_4=\la p_1,\ldots,p_4\ra$, 
where $q_j=a_2p_j$, $1\le j\le 4$, be given in such a way that
$$ 
d_6\in{\sf W}_5\!\setminus\!\{q_1,\ldots,q_5\},\quad q_5\in{\sf S}_4\!\setminus
\!\{p_1,\ldots,p_4\},\quad\gcd(a_1,d_6)=\gcd(a_2,q_5)=1.
$$ 
Let the lower bound $F_{6w}$ of the Frobenius number $F({\sf W}^2_6)$ of the 
semigroup ${\sf W}^2_6$ be represented as, $F_{6w^2}\!=\!g_{6w^2}-\left(a_1a_2
\sum_{j=1}^4p_j+a_1q_5+d_6\right)$. Then
\bea
g_{6w^2}=a_1\left[a_2\left(\lambda_4\sqrt[3]{\pi_4(p)}+q_5\right)+d_6\right],
\qquad\pi_4(p)=\prod_{j=1}^4p_j,\label{j46}
\eea
where $\lambda_4$ is defined in (\ref{j41}).
\end{lemma}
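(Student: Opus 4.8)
The plan is to iterate Lemma~\ref{le3} exactly as Theorem~\ref{th3} constructs the property $W^2$ from two applications of the property $W$. The statement of Lemma~\ref{le4} is the natural ``doubling'' of Lemma~\ref{le3}: we have a tower ${\sf W}^2_6=\la a_1{\sf W}_5,d_6\ra$ sitting over ${\sf W}_5=\la a_2{\sf S}_4,q_5\ra$ sitting over ${\sf S}_4$, and the goal is to propagate the lower bound for the largest degree $g$ up through the two Watanabe extensions.

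First I would apply the Brauer--type gluing formula (\ref{j44}) at the top level, with ${\sf W}^2_6=\la a_1{\sf W}_5,d_6\ra$ playing the role of ${\sf W}_6$ and ${\sf W}_5$ playing the role of ${\sf S}_5$, to get $F({\sf W}^2_6)=a_1F({\sf W}_5)+(a_1-1)d_6$. Next I would apply (\ref{j44}) again one level down, now with ${\sf W}_5=\la a_2{\sf S}_4,q_5\ra$ over ${\sf S}_4$, to express $F({\sf W}_5)=a_2F({\sf S}_4)+(a_2-1)q_5$. For the base ${\sf S}_4$ I would invoke the known lower bound for $5$-generated, oops $4$-generated symmetric (not CI) semigroups, i.e.\ the analogue of (\ref{j43}) at level $m=4$: $F({\sf S}_4)\ge h_4-\sum_{j=1}^4 p_j$ with $h_4=\lambda_4\sqrt[3]{\pi_4(p)}$ and $\lambda_4$ as in (\ref{j41}). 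Composing these three relations gives
\bea
F({\sf W}^2_6)=a_1\bigl[a_2\,F({\sf S}_4)+(a_2-1)q_5\bigr]+(a_1-1)d_6,\nonumber
\eea
into which I substitute the bound for $F({\sf S}_4)$.

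Then, exactly as in the proof of Lemma~\ref{le3}, I would equate this expression with the posited representation $F_{6w^2}=g_{6w^2}-\bigl(a_1a_2\sum_{j=1}^4 p_j+a_1q_5+d_6\bigr)$ and solve for $g_{6w^2}$. The telescoping cancellation is the key bookkeeping step: the generator sums $a_1a_2\sum p_j$, the $a_1q_5$ term, and the lone $d_6$ on the left are designed to absorb precisely the $-a_1a_2\sum p_j$, $-(a_1-1)q_5$, and $-(a_1-1)d_6$ contributions coming from the unrolled Frobenius formula, leaving $g_{6w^2}=a_1\bigl[a_2(h_4+q_5)+d_6\bigr]$. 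Substituting $h_4=\lambda_4\sqrt[3]{\pi_4(p)}$ then yields (\ref{j46}).

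The main obstacle is not any single calculation but making sure the constant (non-$g$) terms match up correctly through the two nested substitutions, since there are now two gluing multipliers $a_1,a_2$ and three distinct additive generators $d_6$, $q_5$, and the $p_j$ entering at different depths; a single sign or factor-of-$a_i$ slip in the telescoping will spoil the clean form of (\ref{j46}). A secondary point to verify, rather than prove, is that the base-case bound $h_4=\lambda_4\sqrt[3]{\pi_4(p)}$ is legitimately available: Corollary~\ref{co1} guarantees that ${\sf S}_4$ is genuinely symmetric (not CI) whenever ${\sf W}^2_6$ is, so the level-$4$ analogue of (\ref{j43}) from \cite{fe016} applies, and the whole argument is just a two-step recursion of the one-step computation carried out in Lemma~\ref{le3}.
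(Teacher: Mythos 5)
Your proposal is correct and follows essentially the same route as the paper: the paper applies the Brauer--Shockley gluing formula once at the top level and cites the already-established bound $g_{5w}=a_2\left(\lambda_4\sqrt[3]{\pi_4(p)}+q_5\right)$ for ${\sf W}_5$ (Lemma 2 of \cite{fe018}, eq. (\ref{j47})), whereas you re-derive that intermediate bound by a second application of the gluing formula together with the $m=4$ base case from \cite{fe016} --- the same computation, merely unrolled. The telescoping you describe lands on the correct $g_{6w^2}=a_1\left[a_2(h_4+q_5)+d_6\right]$, matching (\ref{j46}).
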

%%%%%%%%%%%%%%%%%%%%%%%%%%%%%%%%%%%%%%%%%%%%%%%%%%%%%%%%
\begin{proof}
By Lemma 2 in \cite{fe018}, the lower bound $F_{5w}$ of its Frobenius number
$F({\sf W}_5)$ of the symmetric (not CI) semigroup ${\sf W}_5$ reads:
\bea
F_{5w}=g_{5w}-\left(a_2\sum_{j=1}^4p_j+q_5\right),\qquad
g_{5w}=a_2\left(\lambda_4\sqrt[3]{\pi_4(p)}+q_5\right).\label{j47}
\eea
Consider a symmetric (not CI) semigroup ${\sf W}^2_6$, generated by six 
integers, and make use of a relationship between the Frobenius numbers 
$F({\sf W}^2_6)$ and $F({\sf W}_5)$ derived in \cite{bra62}:
\bea
F({\sf W}^2_6)=a_1F({\sf W}_5)+(a_1-1)d_6.\label{j48}
\eea
Substituting $F({\sf W}^2_6)\!=\!g_{6w^2}-\left(a_1a_2\sum_{j=1}^4p_j+
a_1q_5+d_6\right)$ and the representation (\ref{j47}) for $F({\sf S}_5)$ 
into (\ref{j48}), we obtain
\bea
g_{6w^2}-a_1a_2\sum_{j=1}^4p_j-a_1q_5-d_6=a_1\left[g_{5w}-
\left(a_2\sum_{j=1}^4p_j+q_5\right)\right]+(a_1-1)d_6.\label{j49}
\eea
Simplifying the last equality (\ref{j49}), we arrive at (\ref{j46}).
\end{proof}
Among the subsets $\{{\sf W}^2_6\}$, $\{{\sf W}_6\}$ and the entire set 
$\{{\sf S}_6\}$ of symmetric (not CI) semigroups, generated by six integers, 
the following containment holds:
$$
\{{\sf W}^2_6\}\subset\{{\sf W}_6\}\subset\{{\sf S}_6\}.
$$
%%%%%%%%%%%%%%%%%%%%%%%%%%%%%%%%%%%%%%%%%%%%%%%%%%%%%%%%
Below we present twelve symmetric (not CI) semigroup generated by six integers:
$V_1,V_2,V_3,V_4$ -- without the $W$ property, $V_5,V_6,V_7,V_8$ -- with the 
$W$ property, $V_9,V_{10},V_{11},V_{12}$ -- with the $W^2$ property.
\bea
&&V_1=\la 7,9,11,12,13,15\ra,\qquad V_5=\la 12,20,28,30,38,41\ra,\quad
V_9=\la 30,33,36,37,42,48\ra,\nonumber\\
&&V_2=\la 7,9,10,11,12,13\ra,\qquad V_6=\la 12,20,28,38,46,47\ra,\quad
V_{10}\!=\!\la 42,45,48,54,59,78\ra,\nonumber\\
&&V_3=\la 12,13,14,15,17,19\ra,\quad V_7=\la 14,24,26,36,46,49\ra,\quad 
V_{11}\!=\!\la 40,42,48,54,71,78\ra,\nonumber\\
&&V_4=\la 12,13,14,15,18,19\ra,\quad V_8=\la 38,46,58,62,74,79\ra,\quad
V_{12}\!=\!\la 46,48,75,78,90,102\ra.\nonumber
\eea
We give a comparative Table 1 for the largest degree $g$ of syzygies and its 
lower bounds $g_6$, $g_{6w}$, $g_{6w^2}$ and $\widetilde{g_6}$, calculated by 
formula (\ref{j37}).
\begin{center}
\begin{tabular}
{|c||c|c|c|c|c|c|c|c|c|c|c|c|}\hline
\multicolumn{1}{|c||}{}&\multicolumn{4}{|c|}{---}&\multicolumn{4}{|c|}{
{\em $W$ property}}&\multicolumn{4}{|c|}{{\em $W^2$ property}}\\\hline
${\sf S}_6$&$V_1$&$V_2$&$V_3$&$V_4$&$V_5$&$V_6$&$V_7$&$V_8$&$V_9$&$V_{10}$&
$V_{11}$&$V_{12}$\\\hline\hline
$\beta_1$ & 13 & 14 & 10 & 10 & 8 & 9 & 10 & 14 & 7 & 7 & 7 & 7\\ \hline
$\beta_2$ & 31 & 35 & 19 & 22 & 19 & 18  & 23 & 37 &16 &16 &16 &16 \\ \hline
${\cal B}_6$&9.5 & 11 & 5 & 6.5 & 6 & 5 & 7 & 12 &5 &5 &5 & 5\\ \hline\hline
$g$ & 84 & 77 & 125 & 126 & 256 & 292 & 302 & 638 &387 &603 &598 & 816\\\hline
$g_{6w^2}$ &--&--&--&--&--&--&--&--&385.6 &595.3 & 590.3& 811.2\\ \hline
$g_{6w}$ &--&--&--&--&240.4&271.2&286&609.2&359.8&554.8&548&746.9\\\hline
$g_6$&55&49.6&88&86.5&173.3&196&196.6&395.4&274.4&420.8&426.6&586.6\\\hline
$\widetilde{g_6}$ & 45.5 & 42 & 66.2 & 66.9 &130.4 &146 &153.4 &338 &199.9 
&306.5 & 310.7& 427.2\\\hline
\end{tabular}
\end{center}

\begin{center}
Table 1. The largest degree $g$ of syzygies for symmetric (not CI) semigroups 
${\sf S}_6$ with different Betti's numbers $\beta_1,\beta_2$ and its lower 
bounds $g_6$, $g_{6w}$, $g_{6w^2}$, $\widetilde{g_6}$.
\end{center}
For symmetric (not CI) semigroups ${\sf W}^2_6$, presented in Table 1, there 
following inequalities hold:
\bea
g>g_{6w^2}>g_{6w}>g_6>\widetilde{g_6}.\label{j50}
\eea
For the rest of symmetric (not CI) semigroups ${\sf W}_6$ and ${\sf S}_6$ the 
bounds $g_{6w^2}$ and $g_{6w}$ are skipped in inequalities (\ref{j50}) depending
on the existence (or absence) of the {\em W} property in these semigroups. It 
is easy to verify that the Betti numbers of all semigroups from Table 1 
satisfy the constraints (\ref{j40}).
%%%%%%%%%%%%%%%%%%%%%%%%%%%%%%%%%%%%%%%%%%%%%%%%%%%%%%%%%%%
\section*{Acknowledgement}
%%%%%%%%%%%%%%%%%%%%%%%%%%%%%%%%%%%%%%%%%%%%%%%%%%%%%%%%%%%
The research was partly supported by the Kamea Fellowship.
%%%%%%%%%%%%%%%%%%%%%%%%%%%%%%%%%%%%%%%%%%%%%%%%%%%%%%%%%%%

\end{document}